\newtheorem{theorem}{Theorem}[section]
\newtheorem{thm}[theorem]{Theorem}
\newtheorem{lem}[theorem]{Lemma}
\newtheorem{pro}[theorem]{Proposition}
\newtheorem{cor}[theorem]{Corollary}
\newtheorem{rem}[theorem]{Remark}
\newtheorem*{thmA}{Theorem A}
\newtheorem*{thmB}{Theorem B}
\newtheorem{ques}{\scshape{Question}}
\newcommand{\Sol}{\mathrm{Sol}}
\newcommand{\PSL}{\mathrm{PSL}}
\newcommand{\Fit}{\mathrm{Fit}}
\newcommand{\A}{\mathrm{Alt}}
\title{On the solubilizer of an element in a finite group}
\author[B. Akbari]{B. Akbari}
\address{DMA, École normale supérieure, Université PSL, CNRS 75005 Paris, France
}
\email{(Akbari)  b.akbari@cornell.edu}
\author[C. Delizia]{C. Delizia}
\address{Dipartimento di Matematica, Universit\`a di Salerno, Fisciano (SA), Italy}
\email{(Delizia) cdelizia@unisa.it}
\author[C. Monetta]{C. Monetta}
\address{Dipartimento di Matematica, Universit\`a di Salerno, Fisciano (SA), Italy}
\email{(Monetta) cmonetta@unisa.it}
\subjclass[2020]{20D10, 05C25, 20D60}
\keywords{Soluble group; Solubility graph; Solubilizer}
\begin{document}

\maketitle

\begin{abstract}
The solubility graph $\Gamma_S(G)$ associated with a finite group $G$ is a simple graph whose vertices are the elements of $G$, and there is an edge between two distinct vertices if and only if they generate a soluble subgroup. In this paper, we focus on the set of neighbors of a vertex $x$ which we call the solubilizer of $x$ in  $G$, $\Sol_G(x)$, investigating both arithmetic and structural properties of this set.
\end{abstract}

%%%%%%%%%%%%%%%%%%%%%% SECTION

\section{Introduction}\label{I}
\noindent All groups considered in the present paper are supposed to be finite. The {\it solubility graph} $\Gamma_S(G)$ associated with a group $G$ is a simple graph whose vertices are the elements of $G$, and there is an edge between two distinct elements $x$ and $y$ if and only if the subgroup $\langle x,y \rangle$ is soluble. In \cite{thompson} Thompson proved that a finite group $G$ is soluble if and only if for every $x, y \in G$ the subgroup $\langle x,y \rangle$ is soluble. This implies that a finite group $G$ is soluble if and only if the graph $\Gamma_S (G)$ is complete.
We denote by $R(G)$ the soluble radical of a finite group  $G$, that is, the largest soluble normal subgroup of $G$. In \cite{GKPS} Guralnick et al. proved that if $x$ is an element of $G$, then $x \in R(G)$ if and only if the subgroup $\langle x,y \rangle$ is soluble for all $y \in G$. This means that $x \in R(G)$ if and only if $x$ is a {\it universal vertex} of $\Gamma_S(G)$, that is a vertex being adjacent to every other vertex in the graph. The subgraph of $\Gamma_S(G)$ obtained by removing all vertices in $R(G)$ is connected when $G$ is a finite group (see \cite{ALMM}). Furthermore, it has been proved in \cite{BLN} that its diameter is at most 5 in general, and at most 3 if $G$ is not almost simple.

For all $x \in G$, the neighborhood of $x$ in $\Gamma_S(G)$ is called the {\it solubilizer} of $x$ in $G$, and it is denoted by $\Sol_G(x)$. Then
\[
\Sol_G(x) = \{y \in G \ \mid \ \langle x,y \rangle \hbox{ is soluble}\}.
\]
The aim of this paper is to continue the investigation of properties of the solubilizer of an element in a finite group, started in \cite{HR} and \cite{ALMM}.

In Section~\ref{P} we collect known properties of the solubilizer of a vertex in the graph $\Gamma_S(G)$. In general $\Sol_G(x)$ is just a subset of $G$ and not a subgroup. However, it can happen that $\Sol_G(x)$ is a subgroup. For instance, the above result in \cite{GKPS} yields that  $x \in R(G)$ if and only if $\Sol_G(x)=G$. On the other side, if $\Sol_G(x)$ is a subgroup of $G$ it does not yield $x \in R(G)$.  Therefore the fact that a single solubilizer is a subgroup does not imply specific restrictions on the structure of the whole group. The situation is quite different when all solubilizers are subgroups. Indeed, in \cite{ALMM} it has been shown that a group $G$ is soluble if and only if $\Sol_G(x)$ is a subgroup of $G$ for all $x \in G$. 

It is an interesting problem to find algebraic conditions on the elements of a single solubilizer determining restrictions on the structure of the whole group. In \cite{ALMM} the authors proved that if $G$ is a group having an element $x$ such that the elements of $\Sol_G(x)$ commute pairwise, then $G$ is abelian. In Section~\ref{A} we show a generalization of this result. Given an integer $k \geq 2$ and elements $x_1, \ldots, x_k$ of a group $G$, the {\it long commutator of weight} $k$ is inductively defined by the formulae
\[
[x_1,x_2]=x_1^{-1}x_2^{-1}x_1x_2,  \qquad [x_1, \ldots, x_k]=[[x_1, \ldots, x_{k-1}], x_k]\quad (k>2).
\]
The subgroup $\gamma_k(G)$ generated by all long commutators of weight $k$ is the $k$th term of the lower central series of $G$. When rephrased in terms of long commutators, the above result in \cite{ALMM} asserts that $\gamma_2(G)=1$ if and only if there exists an element $x \in G$ such that $[x_1,x_2]=1$ for all $x_1,x_2 \in \Sol_G(x)$. Therefore the following question arises naturally.

\begin{ques}\label{q1}
Let $G$ be a group, $x \in G$ and $k \geq 3$. If $[u_1, \ldots, u_k]=1$ for every $u_1, \ldots, u_k \in \Sol_G(x)$, is $\gamma_k(G)=1$?
\end{ques}

Notice that a positive answer would provide a nilpotency criterion for finite groups, since the converse is obviously true. Our main result in Section~\ref{A} is an affirmative answer to Question~\ref{q1} when $k=3$. 

\begin{thmA}\label{thm:nilp2}
Let $G$ be a finite group. Then $G$ is nilpotent of class at most $2$ if and only if there exists an element $x \in G$ such that $[u_1,u_2,u_3]=1$ for every $u_1,u_2,u_3 \in \Sol_G(x)$.
\end{thmA}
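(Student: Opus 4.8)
The \emph{only if} direction is immediate: if $G$ is nilpotent of class at most $2$ then $\gamma_3(G)=1$, so $[u_1,u_2,u_3]=1$ for \emph{all} $u_1,u_2,u_3\in G$, and a fortiori whenever they lie in $\Sol_G(x)$, for any choice of $x$. The substance is the converse, and the plan is to first extract a strong structural consequence from the hypothesis and then reduce everything to proving that $G$ is soluble.

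Set $S=\Sol_G(x)$ and assume the triple law holds on $S$. Specialising $u_3$ shows that for all $u_1,u_2,u_3\in S$ the commutator $[u_1,u_2]$ centralises $u_3$; as $S$ generates $H:=\langle S\rangle$, the centraliser of $[u_1,u_2]$ contains $H$, so $[u_1,u_2]\in Z(H)$ for all $u_1,u_2\in S$. Because the derived subgroup of $H=\langle S\rangle$ is the normal closure of the commutators of the generators, and these commutators are now central, we get $H'\le Z(H)$, i.e. $H$ is nilpotent of class at most $2$. In particular $H$ is soluble; since $x\in C_G(x)\subseteq S\subseteq H$, for every $h\in H$ the group $\langle x,h\rangle\le H$ is soluble, so $h\in S$. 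Therefore $S=H$: under the hypothesis $\Sol_G(x)$ is a subgroup, nilpotent of class at most $2$, and it is the unique maximal soluble subgroup of $G$ containing $x$.

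I would now reduce to a solubility statement. If $G$ is soluble then $\Gamma_S(G)$ is complete, so $\Sol_G(x)=G$ and the triple law holds on all of $G$, giving $\gamma_3(G)=1$ at once. Hence it suffices to prove that the hypothesis forces $G$ to be soluble, and for this I would use a minimal counterexample $G$. The key preparatory lemma is that the hypothesis is inherited by $G/R(G)$: if $\langle xR(G),yR(G)\rangle$ is soluble in $G/R(G)$ then $\langle x,y\rangle R(G)$ is soluble, being an extension of the soluble group $R(G)$ by a soluble group, so $\langle x,y\rangle$ is soluble and $y\in\Sol_G(x)$; thus any lift of an element of $\Sol_{G/R(G)}(xR(G))$ lies in $\Sol_G(x)$, and the triple law descends to $G/R(G)$. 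Consequently a minimal counterexample has $R(G)=1$, and by minimality every proper subgroup containing $x$ is soluble. From this one deduces $\langle x\rangle^G=G$ and $\mathrm{Soc}(G)\langle x\rangle=G$, while $R(G)=1$ gives $F^*(G)=\mathrm{Soc}(G)=S_1\times\cdots\times S_m$, a direct product of nonabelian simple groups with $C_G(\mathrm{Soc}(G))=1$.

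It then remains to contradict the fact, established above, that $\Sol_G(x)=H$ is nilpotent of class at most $2$: since every soluble subgroup containing $x$ lies in $\Sol_G(x)=H$, it would suffice to exhibit a single soluble subgroup containing $x$ that is \emph{not} nilpotent of class at most $2$ (for instance a Frobenius or dihedral-type subgroup, or a nilpotent subgroup of class at least $3$). I expect this last step to be the main obstacle. Proving that such an overgroup of $x$ always exists in a nonsoluble group with trivial soluble radical seems to require reducing to the almost simple case and invoking the classification of finite simple groups — e.g. Thompson's list of minimal simple groups — to verify, family by family, that every nontrivial element lies in a non-nilpotent soluble subgroup. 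An appealing alternative would be a Baer--Suzuki-type criterion for the soluble radical asserting that $x\in R(G)$ as soon as $\langle x,x^g\rangle$ is soluble for all $g$; combined with $\langle x\rangle^G=G$ this would force $H=\Sol_G(x)=G$ directly. Either way, the genuinely hard content is this reduction to simple groups, the commutator computation and the passage to $G/R(G)$ being routine by comparison.
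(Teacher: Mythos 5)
Your opening reductions are correct and essentially reproduce the paper's Lemma~\ref{lem:nilpk}: the identity $[u_1,u_2,u_3]=1$ on $S=\Sol_G(x)$ forces every commutator $[u_1,u_2]$ with $u_1,u_2\in S$ to centralise $\langle S\rangle$, so $H=\langle S\rangle$ satisfies $H'\le Z(H)$, and then $S=H$ because every element of the soluble group $H$ generates a soluble subgroup together with $x$. Your further observations --- that it suffices to prove $G$ soluble, that the hypothesis passes to $G/R(G)$ and to subgroups containing $x$, and that a minimal counterexample has $R(G)=1$ with $\Sol_G(x)$ the unique maximal subgroup containing $x$ --- are also sound. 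But the argument then stops: the final contradiction, namely that an insoluble group with trivial soluble radical cannot have a nilpotent subgroup of class at most $2$ as its unique maximal soluble overgroup of $x$, is precisely the step you leave open (``I expect this last step to be the main obstacle''). What you have is therefore a correct reduction, not a proof.

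The missing ingredient is a classical, pre-classification theorem of Janko (Theorem~\ref{thm:janko} in the paper): a finite group possessing a nilpotent maximal subgroup whose Sylow $2$-subgroup has class at most $2$ is soluble. With it, the entire minimal-counterexample apparatus becomes unnecessary: $M=\Sol_G(x)$ is nilpotent of class at most $2$, and $M$ is not properly contained in any soluble subgroup of $G$ (any soluble subgroup containing $M$ contains $x$, hence lies in $\Sol_G(x)=M$), so Corollary~\ref{cor:nilp} gives $G=M$ at once --- this is the paper's whole proof. Note also that the ``appealing alternative'' you propose, a Baer--Suzuki-type criterion asserting $x\in R(G)$ whenever $\langle x,x^g\rangle$ is soluble for all $g\in G$, is false in general: if $x$ is an involution, then $\langle x,x^g\rangle$ is dihedral, hence soluble, for every $g$, yet insoluble groups with $R(G)=1$ contain involutions; such a criterion holds only under restrictions on $o(x)$ (e.g.\ Guest's theorem for elements of prime order $p\ge 5$). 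So the classification-flavoured route you sketch is both harder than necessary and, in its Baer--Suzuki variant, unworkable; the key lemma you were missing is Janko's theorem.
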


Furthermore, for $k \geq 4$ we point out structural restrictions for a minimal (with respect to the cardinality) non-nilpotent group $G$ containing an element $x$ such that $[x_1, \ldots, x_k]=1$ for all $x_1, \ldots, x_k \in \Sol_G(x)$.

In \cite{ALMM} it has been shown that if $x$ is an element of an insoluble group $G$ then the cardinality of $\Sol_G(x)$ cannot be a prime. Furthermore, if $R(G)$ is not trivial, then the cardinality of $\Sol_G(x)$ cannot be a square of a prime. These are the first answers to the following arithmetic question related to the graph $\Gamma_S(G)$.

\begin{ques}\label{q2}
Let $G$ be a group, and let $x$ be an element of $G$. Which positive integers can occur as the cardinality of $\Sol_G(x)$?
\end{ques}

In Section~\ref{B} we prove that the above-mentioned result in \cite{ALMM} remains true if the hypothesis $R(G) \neq 1$ is dropped.

\begin{thmB}\label{thm:quadrato}
Let $G$ be an insoluble group and $x$ an element of $G$. Then the cardinality of $\Sol_G(x)$ cannot be equal to $p^2$ for any prime $p$.
\end{thmB}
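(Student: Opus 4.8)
The plan is to reduce immediately to the case $R(G)=1$, since the complementary case $R(G)\neq 1$ is exactly the result of \cite{ALMM} recalled in the introduction, and then to feed the problem back into that same result applied to a suitable proper subgroup. So assume $R(G)=1$ and, for contradiction, $|\Sol_G(x)|=p^2$. I would first clear away the trivialities. If $x=1$ then $\Sol_G(x)=G$, forcing $|G|=p^2$ and hence $G$ soluble, a contradiction. Using that $\Sol_G(x)$ is a union of right cosets of $\langle x\rangle$ (because $\langle x,yx^i\rangle=\langle x,y\rangle$, so membership is preserved under right multiplication by powers of $x$), the order $o(x)$ divides $p^2$, so $x$ is a nontrivial $p$-element. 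Finally, fixing a Sylow $p$-subgroup $P$ with $x\in P$, solubility of $P$ gives $P\subseteq\Sol_G(x)$ and therefore $|P|\in\{p,p^2\}$.

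The core step is the confinement $\Sol_G(x)\subseteq N_G(P)$. When $|P|=p^2$ this is immediate, since then $P\subseteq\Sol_G(x)$ forces $\Sol_G(x)=P$ by cardinality. When $|P|=p$ I would argue by Sylow counting: every soluble $H$ with $x\in H$ lies inside $\Sol_G(x)$, so $|H|\le p^2$ with $|H|_p=p$, whence the number of Sylow $p$-subgroups of $H$ is at most $|H|_{p'}\le p-1$ and is $\equiv 1\pmod p$, forcing it to equal $1$; thus $\langle x\rangle=P\trianglelefteq H$ and $H\le N_G(P)$. Since $\Sol_G(x)$ is the union of all soluble subgroups containing $x$, this gives $\Sol_G(x)\subseteq N_G(P)$.

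Writing $M=N_G(P)$, one checks from the confinement that $\Sol_G(x)=\Sol_M(x)$, so $|\Sol_M(x)|=p^2$, while $P\trianglelefteq M$ is a nontrivial soluble normal subgroup, so $R(M)\neq 1$. If $M$ is insoluble, applying the result of \cite{ALMM} to $M$ yields $|\Sol_M(x)|\neq p^2$, a contradiction. Hence $M$ is soluble, so $\Sol_M(x)=M$ and $|M|=p^2$; as $M/P$ is a $p'$-group this forces $|P|=p^2$ and $M=P$, i.e.\ $P$ is a self-centralizing, self-normalizing abelian Sylow $p$-subgroup with $\Sol_G(x)=P$. In this last configuration Burnside's normal $p$-complement theorem produces $G=P\ltimes K$ with $K=O_{p'}(G)$; for each prime $r\mid|K|$ an $x$-invariant Sylow $r$-subgroup $R$ of $K$ exists by coprime action, and $\langle x\rangle R$ is a soluble $\{p,r\}$-group (Burnside's $p^aq^b$ theorem) containing $x$, so $\langle x\rangle R\subseteq\Sol_G(x)=P$ gives $R\le P$ and thus $R=1$; therefore $K=1$ and $G=P$ is soluble, the final contradiction.

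The main obstacle, and the conceptual heart of the argument, lies in the two middle steps: establishing the confinement $\Sol_G(x)\subseteq N_G(P)$ in the case $|P|=p$, and then recognizing that $M=N_G(P)$ automatically carries a nontrivial soluble radical, which is precisely what lets me recycle the already-established $R(G)\neq1$ case on the smaller group $M$. The only genuinely separate piece is the self-normalizing abelian Sylow endgame, where no such recursion is available and one must fall back on classical transfer theory (Burnside's two complement theorems) together with coprime action to eliminate the $p'$-part directly.
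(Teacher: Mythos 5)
Your argument is correct, but after the first shared step it takes a genuinely different route from the paper's. The shared step is the confinement: your Sylow-counting argument in the case $|P|=p$ (every soluble subgroup $H$ containing $x$ has $|H|\le p^2$, hence $n_p(H)=1$ and $H\le N_G(\langle x\rangle)$) is precisely the paper's Lemma~\ref{lem:bob}. From there the paper finishes almost immediately: if $p^2$ divides $|G|$, then $\Sol_G(x)=P$ is an abelian subgroup and \cite[Theorem~1.2]{ALMM} (a pairwise-commuting solubilizer forces $G$ abelian) gives a contradiction; if $p^2\nmid|G|$, then $\Sol_G(x)=N_G(\langle x\rangle)$ would be a subgroup of order $p^2$, contradicting Lagrange. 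You never invoke that commuting-solubilizer criterion. Instead you bootstrap the weaker, already-published form of Theorem~B itself (the $R(G)\neq1$ case from \cite{ALMM}) by applying it to $M=N_G(P)$, where $R(M)\neq1$ holds for free because $P\trianglelefteq M$; the surviving soluble-$M$ configuration is a self-normalizing abelian Sylow $p$-subgroup equal to $\Sol_G(x)$, which you eliminate with Burnside's normal $p$-complement theorem, coprime action, and Burnside's $p^aq^b$ theorem. Both routes are sound. The paper's is shorter and stays inside its own toolkit; yours is heavier at the end but has an appealing recursive structure (feed the hypothesis-laden version of the statement into a subgroup where the hypothesis is automatic) and is independent of \cite[Theorem~1.2]{ALMM}. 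Two minor observations: your opening reduction to $R(G)=1$ is never actually used in the sequel, so it can be dropped; and your transfer-theoretic endgame can be short-circuited, since once $\Sol_G(x)=P$ is an abelian subgroup not properly contained in any soluble subgroup of $G$, Corollary~\ref{cor:nilp} (or again \cite[Theorem~1.2]{ALMM}) yields $G=P$ at once.
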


Furthermore, we state restrictions for a prime $p$ when the cardinality of $\Sol_G(x)$ is equal to $3p$. Finally, we show that if $x$ is any element of an insoluble group $G$ then the set $\Sol_G(x)$ has cardinality at least $10$.

%%%%%%%%%%%%%%%%%%%%%% SECTION

\section{Preliminary results}\label{P}

\noindent In this section we collect some properties and results which will be useful in the sequel. From now on, if $X$ is any subset of a finite group $G$, the cardinality of $X$ will be denoted by $|X|$. Moreover, for an element $x\in G$, $o(x)$ will denote the order of $x$, and $C_G(x)$ the centralizer of $x$ in $G$. Finally, for a subgroup $H$ of $G$, $N_G(H)$ and $C_G(H)$ will denote the normalizer and the centralizer of $H$ in $G$, respectively. We start by mentioning some results about the solubilizer of an element in a finite group.

\begin{lem}[\cite{ALMM}]\label{lem:property} Let $G$ be a group and $x \in G$. Then:
\begin{enumerate}
    \item[$(a)$] $ \langle x \rangle \subseteq C_G(x) \subseteq N_G(\langle x \rangle) \subseteq N_G(\langle x \rangle) \cup R(G) \subseteq \Sol_G(x)$;
    
    \item[$(b)$] $\Sol_G(x)$ is the union of all soluble subgroups of $G$ containing $x$;
    
    \item[$(c)$] $|\Sol_G(x)|$ is divisible by $o(x)$.
\end{enumerate}
\end{lem}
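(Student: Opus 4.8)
The plan is to prove the three parts separately, after noting that almost all of the chain in $(a)$ is formal: the only genuine content lies in its last inclusion, and the same underlying fact (solubility of $\langle x,y\rangle$) drives $(b)$ and $(c)$ as well. For part $(a)$, the inclusion $\langle x\rangle\subseteq C_G(x)$ holds because every power of $x$ commutes with $x$, and $C_G(x)\subseteq N_G(\langle x\rangle)$ holds because an element centralizing $x$ fixes $\langle x\rangle$ setwise; the inclusion $N_G(\langle x\rangle)\subseteq N_G(\langle x\rangle)\cup R(G)$ is trivial. The substantive step is $N_G(\langle x\rangle)\cup R(G)\subseteq\Sol_G(x)$, which I would handle by treating the two pieces of the union in turn. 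If $y\in N_G(\langle x\rangle)$, then both generators of $\langle x,y\rangle$ normalize $\langle x\rangle$, so $\langle x\rangle\trianglelefteq\langle x,y\rangle$; since $\langle x\rangle$ is cyclic and $\langle x,y\rangle/\langle x\rangle$ is cyclic (generated by the image of $y$), the group $\langle x,y\rangle$ is metacyclic, hence soluble, giving $y\in\Sol_G(x)$. If $y\in R(G)$, then $\langle x,y\rangle\cap R(G)$ is a soluble normal subgroup of $\langle x,y\rangle$, and the quotient $\langle x,y\rangle/(\langle x,y\rangle\cap R(G))\cong\langle x,y\rangle R(G)/R(G)$ is the cyclic group generated by the image of $x$, since $y\in R(G)$ maps to the identity; as an extension of a soluble group by a soluble group is soluble, again $y\in\Sol_G(x)$.

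For part $(b)$ one inclusion is immediate: if $S$ is a soluble subgroup with $x,y\in S$, then $\langle x,y\rangle\le S$ is soluble, so $y\in\Sol_G(x)$. Conversely, if $y\in\Sol_G(x)$ then $\langle x,y\rangle$ is itself a soluble subgroup containing both $x$ and $y$, so $y$ lies in the stated union; the element $x$ is covered by $\langle x\rangle$.

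For part $(c)$ I would show that $\Sol_G(x)$ is a disjoint union of left cosets of $\langle x\rangle$. The key observation is invariance under left multiplication by $\langle x\rangle$: for $y\in\Sol_G(x)$ and any integer $i$ we have $\langle x,x^iy\rangle=\langle x,y\rangle$, which is soluble, so $x^iy\in\Sol_G(x)$ and hence $\langle x\rangle y\subseteq\Sol_G(x)$. Thus $\Sol_G(x)$ is partitioned into left cosets of $\langle x\rangle$, each of size $o(x)$, and therefore $o(x)$ divides $|\Sol_G(x)|$.

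All of these computations are routine subgroup manipulations; the only place demanding a genuine (if short) argument is the inclusion $R(G)\subseteq\Sol_G(x)$ in part $(a)$, where one must pass to the quotient by $\langle x,y\rangle\cap R(G)$ and invoke the closure of solubility under extensions rather than argue directly inside $\langle x,y\rangle$. I expect that to be the main obstacle, while the cyclic-quotient argument for $N_G(\langle x\rangle)$ and the coset argument for $(c)$ are straightforward.
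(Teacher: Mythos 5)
Your proof is correct in all three parts: the metacyclic argument for $N_G(\langle x\rangle)$, the extension argument via $\langle x,y\rangle\cap R(G)$ for the soluble radical, and the coset-partition argument using $\langle x,x^iy\rangle=\langle x,y\rangle$ for divisibility are all sound. Note that the paper itself gives no proof of this lemma --- it is quoted from \cite{ALMM} as a known result --- and your arguments are essentially the standard ones found there, so there is nothing further to compare.
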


\begin{lem}[\cite{HR}]\label{lem:centralizer}
Let $G$ be a group and $x \in G$. Then $|C_G(x)|$ divides $|\Sol_G(x)|$.
\end{lem}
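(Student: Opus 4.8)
The plan is to imitate the proof that $o(x)$ divides $|\Sol_G(x)|$ in \Cref{lem:property}(c), where the divisibility comes from a fixed-point-free action of $\langle x\rangle$ on $\Sol_G(x)$ by right translation (using $\langle x, yx^i\rangle=\langle x,y\rangle$). Since $C_G(x)$ is a subgroup of $G$ contained in $\Sol_G(x)$ by \Cref{lem:property}(a), the first step is to make $C_G(x)$ act on $\Sol_G(x)$. Conjugation does the job: for $c\in C_G(x)$ and $y\in\Sol_G(x)$ we have $x^c=x$, hence $\langle x,y^c\rangle=\langle x,y\rangle^c$ is soluble, so $y^c\in\Sol_G(x)$. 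Thus $C_G(x)$ permutes $\Sol_G(x)$ by conjugation, with orbit sizes dividing $|C_G(x)|$.

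The main obstacle is that, in contrast with the cyclic case, neither of the two ways of reading off divisibility from an action is available. Translation by $C_G(x)$ does \emph{not} preserve $\Sol_G(x)$ (already in $G=\A_5$, with $x$ an involution, one finds $y\in\Sol_G(x)$ and $c\in C_G(x)$ with $yc\notin\Sol_G(x)$), so $\Sol_G(x)$ is not a union of cosets of $C_G(x)$; and the conjugation action is not fixed-point-free, its fixed set being $Z(C_G(x))\supseteq\langle x\rangle$. Hence the divisibility cannot be extracted from a single orbit count and must be obtained globally.

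To organize the global argument I would induct on $|G|$ and first dispose of the case $R(G)\neq1$. Writing $\overline{G}=G/R(G)$ and $\overline{x}=xR(G)$, solubility of $R(G)$ gives, for every $y\in G$, that $\langle x,y\rangle$ is soluble if and only if its image $\langle\overline{x},\overline{y}\rangle$ is soluble (the backward direction because $\langle x,y\rangle R(G)$ is then soluble-by-soluble). Therefore $\Sol_G(x)$ is the full preimage of $\Sol_{\overline G}(\overline x)$, so $|\Sol_G(x)|=|R(G)|\cdot|\Sol_{\overline G}(\overline x)|$. On the other side, $|C_G(x)|=|C_{R(G)}(x)|\cdot[\,C_G(x)R(G):R(G)\,]$, where $C_{R(G)}(x)=C_G(x)\cap R(G)\leq R(G)$ and $C_G(x)R(G)/R(G)$ is a subgroup of $C_{\overline G}(\overline x)$. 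Applying the inductive hypothesis to $\overline G$ gives $[\,C_G(x)R(G):R(G)\,]\mid|C_{\overline G}(\overline x)|\mid|\Sol_{\overline G}(\overline x)|$, while $|C_{R(G)}(x)|\mid|R(G)|$; multiplying these two divisibilities yields $|C_G(x)|\mid|\Sol_G(x)|$.

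This reduces the lemma to the case $R(G)=1$, which I expect to be the crux. Here $G$ has trivial soluble radical, so its socle is a direct product of nonabelian simple groups and $G$ embeds in its automorphism group; I would attack this semisimple case by studying the $C_G(x)$-conjugation action on the set of maximal soluble subgroups containing $x$ (whose union is $\Sol_G(x)$ by \Cref{lem:property}(b)) in tandem with the permutation action of $x$ and $C_G(x)$ on the simple components of the socle, the solubility criteria of Thompson \cite{thompson} and Guralnick et al. \cite{GKPS} being the natural tools. Controlling the overlaps between these maximal soluble subgroups, rather than any feature of a single orbit, is where the real difficulty lies.
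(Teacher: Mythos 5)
Your proposal does not reach a proof. Everything you establish is correct but preliminary: the conjugation action of $C_G(x)$ on $\Sol_G(x)$ is well defined, translation by $C_G(x)$ genuinely fails to preserve $\Sol_G(x)$, the fixed points of the conjugation action are exactly $Z(C_G(x))$ so no single orbit count can give divisibility, and the inductive reduction to $R(G)=1$ via \Cref{quotient} (using $|C_G(x)|=|C_{R(G)}(x)|\,[C_G(x)R(G):R(G)]$ and $C_G(x)R(G)/R(G)\le C_{G/R(G)}(xR(G))$) is valid. But that reduction is the easy half. For the case $R(G)=1$ --- which contains all the content, since it covers every nonabelian simple group --- you offer no argument at all, only a list of tools you would try. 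Note also that the paper itself contains no proof of this lemma to compare against: it is imported from \cite{HR} as a known result; judged on its own, your attempt proves the statement only for groups with $R(G)\neq 1$, assuming it for smaller groups, which is circular as a treatment of the general case.

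Moreover, the plan you sketch for the crux case runs into a concrete obstruction: the divisibility is not visible orbit-by-orbit, so no local ``control of overlaps'' of the maximal soluble overgroups can suffice. Take $G=\A(5)$ and $x$ an involution, so that $C_G(x)$ is a Klein four-group and $|\Sol_G(x)|=36$. The maximal soluble subgroups containing $x$ are one subgroup isomorphic to $\A(4)$, two dihedral subgroups of order $10$, and two subgroups isomorphic to the symmetric group of degree $3$; since maximal soluble subgroups are self-normalizing, $C_G(x)$ has three orbits on this five-element set, of sizes $1$, $2$, $2$, and the unions of the subgroups in these orbits have cardinalities $12$, $18$ and $10$ respectively --- only the first is divisible by $|C_G(x)|=4$. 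All pairwise intersections equal $\{1,x\}$, so only the global total $12+(18-2)+(10-2)=36$ is divisible by $4$. The same failure occurs for the finer decompositions you might use instead: the element $x$ is a fixed point of the conjugation action, the coset $x\langle x\rangle$ is a fixed coset, and the fibre of $y\mapsto\langle x,y\rangle$ over $\langle x\rangle$ is a $C_G(x)$-invariant set of size $o(x)$, so in each case some orbit contributes a quantity not divisible by $|C_G(x)|$. The divisibility is therefore a global cancellation phenomenon, and identifying what forces it is precisely the step your proposal stops short of.
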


If $N$ is a normal soluble subgroup of $G$, we define
\[
\frac{\Sol_G(x)}{N} = \{yN \mid  y \in \Sol_G(x)\} = \{yN \mid \langle x,y \rangle \hbox{ is soluble} \}.
\]

In many situations, the following result enables to reduce to the case when the soluble radical is trivial.
\begin{lem}[\cite{HR}]\label{quotient}
If $N$ is a normal soluble subgroup of a group $G$, then $\Sol_G(x)$ is the union of cosets of $N$, and $\Sol_{G/N}(xN)=\Sol_G(x)/N$. In particular,
    \[
    \left|\frac{\Sol_G(x)}{N}\right| = \frac{|\Sol_G(x)|}{|N|}.
    \]
\end{lem}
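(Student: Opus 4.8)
The plan is to establish the three assertions in turn, everything resting on the elementary closure properties of the class of soluble groups: closure under subgroups, quotients, and extensions. The single observation that does all the work is that whenever $\langle x,y\rangle$ is soluble, the subgroup $\langle x,y\rangle N$ is soluble as well. Indeed, since $N$ is normal in $G$, the product $\langle x,y\rangle N$ is a subgroup, and $\langle x,y\rangle N / N \cong \langle x,y\rangle / (\langle x,y\rangle \cap N)$ is a quotient of the soluble group $\langle x,y\rangle$, hence soluble; as $N$ is soluble by hypothesis, $\langle x,y\rangle N$ is an extension of a soluble group by a soluble group, and therefore soluble.

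For the first assertion --- that $\Sol_G(x)$ is a union of cosets of $N$ --- I would show that $y \in \Sol_G(x)$ forces $yN \subseteq \Sol_G(x)$. Fix $n \in N$. By the observation above, $x$ and $yn$ both lie in the soluble group $\langle x,y\rangle N$, so $\langle x, yn\rangle \leq \langle x,y\rangle N$ is soluble and $yn \in \Sol_G(x)$. Running the same argument with $y$ replaced by $yn$ and $n$ replaced by $n^{-1}$ yields the reverse implication, so membership in $\Sol_G(x)$ depends only on the coset $yN$. Since $N \trianglelefteq G$, left and right cosets coincide, so there is no ambiguity in speaking of ``the coset of $y$''.

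For the second assertion I would identify $\langle xN, yN\rangle$ in $G/N$ as the image of $\langle x,y\rangle$ under the canonical projection, that is, $\langle xN, yN \rangle = \langle x,y\rangle N / N$. If $y \in \Sol_G(x)$, then $\langle x,y\rangle$ is soluble, so its homomorphic image $\langle xN, yN\rangle$ is soluble and $yN \in \Sol_{G/N}(xN)$; this gives the inclusion $\Sol_G(x)/N \subseteq \Sol_{G/N}(xN)$. Conversely, if $\langle xN, yN\rangle = \langle x,y\rangle N/N$ is soluble, then by the observation above $\langle x,y\rangle N$ is soluble, and hence so is its subgroup $\langle x,y\rangle$, whence $y \in \Sol_G(x)$ and $yN \in \Sol_G(x)/N$. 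This proves equality.

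The cardinality identity is then immediate: by the first assertion $\Sol_G(x)$ is a disjoint union of cosets of $N$, each of size $|N|$, and $\Sol_G(x)/N$ is exactly the set of these cosets, so $|\Sol_G(x)/N| = |\Sol_G(x)|/|N|$. I do not anticipate a genuine obstacle here; the only points requiring care are the repeated passage between the two-generator subgroup and its saturation $\langle x,y\rangle N$, and the verification that the subgroup generated in the quotient is precisely the projected subgroup rather than something larger.
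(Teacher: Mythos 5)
Your proof is correct and complete. Note that the paper itself gives no proof of this lemma --- it is quoted from the preprint \cite{HR} --- so there is nothing internal to compare against; your argument (saturating $\langle x,y\rangle$ to $\langle x,y\rangle N$, using closure of solubility under subgroups, quotients and extensions, and identifying $\langle xN,yN\rangle$ with $\langle x,y\rangle N/N$) is precisely the standard one this citation stands in for, and it handles both inclusions and the coset-counting cleanly.
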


In what follows, some famous results are needed that show the existence of a nilpotent maximal subgroup strongly affects the structure of a finite group. We collect them here for the reader convenience.

\begin{thm}[\cite{Janko}]\label{thm:janko}
Let $G$ be a finite group having a nilpotent maximal subgroup
$M$. If a Sylow $2$-subgroup of $M$ has class at most $2$, then $G$ is soluble.
\end{thm}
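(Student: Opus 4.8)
The plan is to argue by contradiction and choose $G$ to be a counterexample of least order: a non-soluble finite group possessing a nilpotent maximal subgroup $M$ whose Sylow $2$-subgroup $S$ has nilpotency class at most $2$. The first goal is to show that the soluble radical is trivial. Indeed, suppose $N=R(G)\neq 1$. If $N\not\subseteq M$, then maximality of $M$ forces $MN=G$, whence $G/N\cong M/(M\cap N)$ is nilpotent, and $G$ is soluble (an extension of the soluble group $N$ by the nilpotent group $G/N$), a contradiction. If instead $N\subseteq M$, then $M/N$ is a nilpotent maximal subgroup of $G/N$ whose Sylow $2$-subgroup is a quotient of $S$, hence again of class at most $2$; since $|G/N|<|G|$, minimality gives that $G/N$ is soluble, and then so is $G$. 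Thus $R(G)=1$, and in particular the Fitting subgroup $F(G)=1$, so the generalized Fitting subgroup satisfies $F^{*}(G)=E(G)$, a nontrivial direct product of non-abelian simple groups with $C_G(F^{*}(G))=1$.

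Next I would pin down the $2$-local structure. The known case of odd order (a theorem of Thompson on nilpotent maximal subgroups) shows that $M$ cannot have odd order, so $S\neq 1$; and the earlier, weaker abelian case shows that $S$ must be non-abelian, i.e. of class exactly $2$, so that $1\neq [S,S]\le Z(S)$. The embedding $E(G)\le G\le \mathrm{Aut}(E(G))$ coming from $C_G(F^{*}(G))=1$ reduces the problem to understanding how the nilpotent maximal subgroup $M$ meets the socle $E(G)$; a separate argument, ruling out two or more components by exploiting that $M$ is nilpotent and maximal, should bring us to the almost simple case, with $T=E(G)$ a single non-abelian simple group. The strategy is then to locate a well-chosen involution $t\in S$ and, using transfer together with fusion arguments (the focal subgroup theorem and Frobenius's normal $p$-complement criterion) and the constraint $[S,S]\le Z(S)$, to control the $G$-fusion of involutions in $S$ and thereby read off the shape of $C_G(t)$.

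The main obstacle is exactly this final local analysis, which carries the genuine content of the theorem. One route is to invoke the classification of finite simple groups: enumerate the possibilities for $T$ and verify case by case that a non-abelian simple group admits no nilpotent maximal subgroup whose Sylow $2$-subgroup has class at most $2$, producing the desired contradiction. A more self-contained route, in the spirit of Janko's involution-centralizer methods, is to show directly that the fusion pattern forced by the class-$\le 2$ hypothesis is incompatible with insolubility. The delicate point in either approach is that, in contrast with the abelian case, class exactly $2$ permits nontrivial commutators inside $S$; consequently the fusion-theoretic bookkeeping must be carried out carefully by hand rather than imported from the abelian theory, and this is where I expect the real work to concentrate.
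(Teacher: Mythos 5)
This statement is not proved in the paper at all: it is Janko's 1964 theorem, imported verbatim from the literature (reference \cite{Janko}), so there is no internal proof to compare against. The question is therefore only whether your proposal stands on its own as a proof, and it does not. Your opening reduction is fine: the minimal-counterexample argument showing $R(G)=1$ is correct in both branches ($N\not\subseteq M$ gives $G/N\cong M/(M\cap N)$ nilpotent, and $N\subseteq M$ gives $M/N$ nilpotent maximal in $G/N$ with Sylow $2$-subgroup of class at most $2$, so minimality applies). But everything after that is a plan, not an argument. The passage from $F^{*}(G)=E(G)$ to the almost simple case is only asserted (``a separate argument\dots should bring us to the almost simple case''), the fusion analysis is described in the conditional (``the strategy is then to locate a well-chosen involution\dots''), and the final contradiction is left as a choice between two unexecuted options: an unperformed case-by-case check against the classification of finite simple groups, or an unperformed transfer/fusion analysis. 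You say yourself that this is ``where I expect the real work to concentrate'' --- which is precisely the content of Janko's theorem. A proof that defers its genuine content to a step labelled as the main obstacle is a research programme, not a proof.

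Two further cautions. First, the CFSG route you mention is anachronistic and not obviously easier than it sounds: one would still need, for each family of simple groups, a structural argument that no nilpotent maximal subgroup with Sylow $2$-subgroup of class at most $2$ exists, and this is itself nontrivial (note that simple groups \emph{do} admit nilpotent maximal subgroups --- e.g.\ Sylow $2$-subgroups of $\PSL(2,q)$ for suitable $q$ are dihedral maximal subgroups --- so the class-$\le 2$ hypothesis must enter essentially, exactly as Theorems~\ref{thm:bau} and \ref{thm:gw} in the paper suggest). Second, your appeal to the ``earlier, weaker abelian case'' and to Thompson's odd-order result is legitimate as background, but those are themselves deep theorems; leaning on them is consistent with Janko's own method, yet it does not discharge the burden of the class-exactly-$2$ fusion bookkeeping, which remains entirely absent from the proposal.
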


As a consequence we have the following.

\begin{cor}\label{cor:nilp}
Let $G$ be a finite group and let $M$ be a nilpotent subgroup of $G$. If the Sylow $2$-subgroup of $M$ has class at most $2$  and $M$ is not properly contained in any soluble subgroup of $G$, then $G=M$.
\end{cor}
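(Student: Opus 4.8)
The plan is to deduce Corollary~\ref{cor:nilp} from Janko's theorem (Theorem~\ref{thm:janko}) by a minimal counterexample argument combined with the maximality hypothesis on $M$. First I would observe that the hypothesis ``$M$ is not properly contained in any soluble subgroup of $G$'' is the crucial structural input: since $M$ is itself soluble (being nilpotent), this says that $M$ is a \emph{maximal soluble subgroup} of $G$. The goal is to upgrade this to the statement that $M$ is actually a maximal subgroup of $G$, after which Theorem~\ref{thm:janko} applies directly.

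I would argue by contradiction. Suppose $M \neq G$. Then $M$ is a proper subgroup, and by the maximality-among-soluble-subgroups hypothesis, $M$ cannot be extended to any larger soluble subgroup; in particular $M$ is \emph{not} contained in any proper subgroup that happens to be soluble other than possibly non-soluble overgroups. The key step is to show $M$ is a maximal subgroup of $G$. Take any subgroup $H$ with $M \leq H \leq G$ and $M \neq H$. If $H \neq G$, I want to derive a contradiction. The natural route is to pass to a minimal counterexample: assume $G$ is a group of least order for which the conclusion fails, so that $M < G$ with $M$ nilpotent, its Sylow $2$-subgroup of class at most $2$, and $M$ maximal soluble, yet $G \neq M$. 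One then wants to locate a maximal subgroup $L$ of $G$ containing $M$ and show that $L$ inherits the same hypotheses, forcing $L$ to be soluble by minimality (or by Janko applied inside $L$), which contradicts the maximal-soluble property of $M$ unless $L = M$, i.e.\ unless $M$ itself is already maximal in $G$.

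More concretely, let $L$ be a maximal subgroup of $G$ with $M \leq L$. If $M < L$, then by Janko's theorem applied to $L$ (note $M$ is a nilpotent subgroup of $L$ whose Sylow $2$-subgroup has class at most $2$, and if $M$ were maximal in $L$ this would make $L$ soluble), one obtains that $L$ is soluble, contradicting the assumption that $M$ lies in no strictly larger soluble subgroup. Hence $M = L$, so $M$ is maximal in $G$. Now apply Theorem~\ref{thm:janko} to $G$ with its nilpotent maximal subgroup $M$: since the Sylow $2$-subgroup of $M$ has class at most $2$, $G$ is soluble. But then $G$ is a soluble subgroup of itself properly containing $M$ (as $M \neq G$ by assumption), again contradicting the hypothesis that $M$ is contained in no larger soluble subgroup. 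This contradiction forces $G = M$.

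The main obstacle I anticipate is the intermediate step establishing that $M$ is genuinely \emph{maximal} in $G$, rather than merely maximal among soluble subgroups: one must rule out a chain $M < L < G$ with $L$ insoluble, where Janko's theorem does not immediately apply since $M$ need not be maximal in $L$. The cleanest way around this is to take $L$ to be a maximal subgroup of $G$ containing $M$ (which always exists when $M < G$), and then argue that $M$ must equal $L$: if $M < L$, a further application of the corollary's hypotheses to the pair $(L, M)$—valid because $M$ is still not contained in any larger soluble subgroup of $L$, and $L$ has smaller order—yields $L = M$ by induction, a contradiction. Thus the whole argument is most naturally phrased as an induction on $|G|$, with Janko's theorem supplying the base mechanism once maximality is secured.
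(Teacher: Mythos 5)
Your proof is correct and follows essentially the route the paper intends: the corollary is stated there as an immediate consequence of Janko's theorem (Theorem~\ref{thm:janko}), obtained by placing $M$ as a nilpotent \emph{maximal} subgroup of some overgroup and then contradicting the hypothesis that $M$ lies in no strictly larger soluble subgroup. Your induction on $|G|$ handles the obstacle you identified (a chain $M < L < G$ with $L$ insoluble) correctly, though it can be compressed to a one-step argument: if $M \neq G$, choose $H$ \emph{minimal} among subgroups of $G$ properly containing $M$, so that $M$ is automatically maximal in $H$; then Janko's theorem makes $H$ soluble, contradicting the hypothesis directly and with no induction needed.
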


In particular, Corollary~\ref{cor:nilp} holds when the subgroup $M$  has odd order.
Thus a central role in the description of finite groups with a nilpotent maximal subgroup is played by  Sylow $2$-subgroups. This is emphasized by the following.

\begin{thm}[\cite{Rose}]\label{thm:rose}
Suppose that $G$ is a finite insoluble group having a nilpotent maximal subgroup $M$. If $Z(G)=1$, then $M$ is a Sylow $2$-subgroup of $G$.
\end{thm}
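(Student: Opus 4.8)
The plan is to isolate two facts that together give the result. Write $M = A\times B$, where $A$ is the Sylow $2$-subgroup and $B$ the Hall $2'$-subgroup (of odd order, hence soluble) of the nilpotent group $M$. I would show first that $A$ is a Sylow $2$-subgroup of the whole group $G$, and then that $B\le Z(G)$; the hypothesis $Z(G)=1$ then forces $B=1$, so that $M=A$ is a Sylow $2$-subgroup. I record at the outset that $M$ cannot be normal in $G$: a normal maximal subgroup has prime index, which would make $G$ soluble. Hence $N_G(M)=M$, and consequently $C_G(M)=Z(M)=Z(A)\times Z(B)$.

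For the first fact, note that $A=O_2(M)$ is characteristic in $M$, so $M\le N_G(A)$ and maximality gives $N_G(A)\in\{M,G\}$. I would rule out $A\trianglelefteq G$ by passing to $\bar G=G/A$: if $\bar G$ were soluble then $G$ would be soluble, so $\bar G$ is insoluble, while $\bar M=M/A\cong B$ has odd order and is maximal in $\bar G$. Since the only subgroup of $\bar G$ properly containing the maximal $\bar M$ is the insoluble $\bar G$ itself, $\bar M$ is not properly contained in any soluble subgroup, and Corollary~\ref{cor:nilp} would force $\bar G=\bar M$, a contradiction. Thus $N_G(A)=M$. Now if a Sylow $2$-subgroup $P$ of $G$ properly contained $A$, then $A<N_P(A)\le N_G(A)=M$ would produce a $2$-subgroup of $M$ strictly larger than $A=O_2(M)$, which is impossible; hence $A=P$ is a Sylow $2$-subgroup of $G$. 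The same corollary (applied to $M$ itself) also guarantees $A\neq 1$, so that $M$ is not of odd order.

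It remains to prove $B\le Z(G)$. Since $M=A\times B$, the factor $A$ centralises $B$, and as $A$ is a full Sylow $2$-subgroup one computes $C_G(M)=Z(M)=Z(A)\times Z(B)$. The key observation is that for every $1\neq z\in Z(M)$ the subgroup $C_G(z)$ contains the maximal subgroup $M$, so $C_G(z)\in\{M,G\}$, with $C_G(z)=G$ precisely when $z\in Z(G)$. Assuming $Z(G)=1$ and $B\neq 1$, choose $1\neq b\in Z(B)\le Z(M)$; then $C_G(b)=M=A\times B$. Thus $b$ is a nontrivial element of odd order whose centraliser has the full Sylow $2$-subgroup $A$ as a direct factor.

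The main obstacle is to convert this centraliser condition into a contradiction; this is the genuine core of the theorem and the part that carries the insolubility. Because $A$ is nonabelian---indeed of class at least $3$ by Theorem~\ref{thm:janko}---Burnside's criterion $N_G(A)=C_G(A)$ fails, so one cannot simply produce a normal $2$-complement, and a finer $2$-local/fusion analysis is required. Concretely, I would aim to show that $B$ is normal in $G$ and that $G$ is generated by its $2$-elements: granting both, $B=O_{2'}(G)$ is centralised by every Sylow $2$-subgroup $A^g$ (each centralising $B^g=B$) and hence by $G=\langle A^g\rangle$, which gives $B\le Z(G)$. Establishing these normality and generation statements is where the real work lies, and I expect to need transfer together with an isolation argument for the involutions of $Z(A)$ (a Brauer--Suzuki/$Z^{*}$-type analysis), in the spirit of the local methods of \cite{Rose}. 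Once $B\le Z(G)$ is secured, the hypothesis $Z(G)=1$ yields $B=1$, whence $M=A$ is a Sylow $2$-subgroup of $G$, as claimed.
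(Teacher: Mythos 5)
There is no internal proof to compare against here: the paper states this result in its preliminaries as a quotation from Rose \cite{Rose} and never proves it. Judged on its own terms, your attempt is genuinely incomplete, though its first half is correct. Writing $M=A\times B$, the observations $N_G(M)=M$ and $C_G(M)=Z(M)$, the argument that $N_G(A)=M$ (ruling out $A\trianglelefteq G$ by applying Corollary~\ref{cor:nilp} to $M/A$ inside $G/A$), the normalizer-growth argument showing $A$ is then a full Sylow $2$-subgroup of $G$, and the reduction to the existence of $1\neq b\in Z(B)$ of odd order with $C_G(b)=M$ are all sound, and they use only tools the paper actually provides.

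The gap is everything after that, and it is where the theorem actually lives. The deduction of a contradiction from such a $b$ --- equivalently, the proof that $B=1$ --- is never carried out: your text replaces it with a plan (``show that $B$ is normal in $G$ and that $G$ is generated by its $2$-elements'') and a list of tools you ``expect to need'' (transfer, a Brauer--Suzuki/$Z^{*}$-type isolation argument). Neither claim in the plan is established, and neither is elementary: the configuration you are left with --- an insoluble $G$ containing an involution $z\in Z(A)$ whose centralizer $C_G(z)=M=A\times B$ is nilpotent, with $A$ a Sylow $2$-subgroup of class at least $3$ (by Theorem~\ref{thm:janko}) and $B\neq 1$ of odd order --- is not self-contradictory on the strength of anything assembled so far. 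Ruling it out is precisely the hard content of Rose's theorem, and in \cite{Rose} it requires a substantial local analysis rather than a consequence of Janko's theorem. So what you have is a correct reduction of the statement to its difficult case, together with an accurate diagnosis of the machinery that case demands, but not a proof; as the paper itself implicitly acknowledges by citing rather than proving this result, that remaining step is not one that can be filled in with the elementary methods available here.
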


Furthermore, in \cite{Baumann} Baumann showed that only some Sylow $2$-subgroups are admitted.

\begin{thm}[\cite{Baumann}]\label{thm:bau}
Let $G$ be a finite insoluble group having a nilpotent maximal subgroup. Let $L = \Fit(G)$ be the Fitting subgroup of $G$. Then $G/L$ has a unique minimal normal subgroup $K/L$, which is a direct product of copies of a simple group with dihedral Sylow $2$-subgroups, and $G/K$ is a $2$-group.
\end{thm}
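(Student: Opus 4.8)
The plan is to use the two structure theorems quoted above to force the nilpotent maximal subgroup $M$ to be a self-normalizing Sylow $2$-subgroup of $G$, and then to read off the structure of $G/L$ from its socle. The main obstacle will be the very last step, identifying the Sylow $2$-subgroups of the simple factors as dihedral.

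First I would normalize the hypotheses. Since the hypercenter $Z_\infty(G)$ (the last term of the upper central series) is nilpotent and normal, it lies in $L=\Fit(G)$, hence in $M$; as $Z_\infty(G)$ is contained in the hypercenter of every subgroup containing it, one has $\Fit(G/Z_\infty(G))=L/Z_\infty(G)$, so passing to $G/Z_\infty(G)$ neither changes the target quotient $G/L$ nor destroys insolubility or the nilpotent maximal subgroup $MZ_\infty(G)/Z_\infty(G)$. Thus I may assume $Z(G)=1$. Now $M$ cannot have odd order: otherwise its Sylow $2$-subgroup is trivial and \Cref{cor:nilp} (applicable because maximality of $M$ and insolubility of $G$ prevent $M$ from lying properly in any soluble subgroup) gives $G=M$ soluble. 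The same corollary shows the Sylow $2$-subgroup $S$ of $M$ has class at least $3$; in particular $S\neq 1$. Since $Z(G)=1$, \Cref{thm:rose} then yields $M=S\in\mathrm{Syl}_2(G)$: the nilpotent maximal subgroup is a Sylow $2$-subgroup.

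Next I would show $\Fit(\bar G)=1$ for $\bar G=G/L$. Because $L$ is normal and $M$ maximal, either $L\le M$ or $LM=G$; the latter makes $G$ a product of two nilpotent subgroups, hence soluble by the Kegel–Wielandt theorem, a contradiction, so $L\le M=S$ is a $2$-group and $L=O_2(G)$. Writing $\bar M=SL/L$, a maximal $2$-subgroup of $\bar G$, any nilpotent normal $\bar N\trianglelefteq\bar G$ satisfies $\bar N\le\bar M$ (else $\bar N\bar M=\bar G$ is again a soluble product), hence $\bar N$ is a $2$-group and $\bar N\le O_2(\bar G)=O_2(G/O_2(G))=1$. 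Therefore $\bar G$ has no nontrivial abelian normal subgroup, so every minimal normal subgroup is nonabelian and $F^*(\bar G)=E(\bar G)=\mathrm{Soc}(\bar G)=T_1\times\cdots\times T_m$ is a direct product of nonabelian simple groups with $C_{\bar G}(\mathrm{Soc}(\bar G))=1$. Each minimal normal subgroup is the product of the $T_i$ over one orbit of the conjugation action of $\bar G$; such a nonabelian product $\bar K$ cannot lie in the nilpotent $\bar M$, so $\bar K\bar M=\bar G$ and $\bar G/\bar K\cong\bar M/(\bar M\cap\bar K)$ is a $2$-group. If there were two distinct minimal normal subgroups $\bar K_1,\bar K_2$, then $\bar K_1\cong\bar K_1\bar K_2/\bar K_2\le\bar G/\bar K_2$ would embed the nonabelian group $\bar K_1$ into a $2$-group, which is absurd. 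Hence $\mathrm{Soc}(\bar G)$ is the unique minimal normal subgroup $\bar K=K/L$, all $T_i$ are isomorphic to a single simple group $T$, and $G/K=\bar G/\bar K$ is a $2$-group. This already delivers the uniqueness of $K/L$ and the assertion that $G/K$ is a $2$-group.

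It remains to prove that $T$ has dihedral Sylow $2$-subgroups, and this is where the real work lies. Here $\bar M=\bar S$ is self-normalizing (if $N_{\bar G}(\bar S)=\bar G$ then $\bar S=O_2(\bar G)=1$), so $\bar G$ has a maximal, self-normalizing Sylow $2$-subgroup, and $\bar S\cap\bar K=S_1\times\cdots\times S_m$ with $S_i\in\mathrm{Syl}_2(T)$. The plan is a $2$-local analysis of a single factor $T$: the nilpotence of $\bar S$ imposes very strong control of the fusion of involutions, generalized quaternion Sylow $2$-subgroups are excluded because, by Brauer–Suzuki, they do not occur in nonabelian simple groups, and the remaining non-dihedral possibilities are eliminated by exploiting the maximal, self-normalizing Sylow $2$-subgroup condition, leaving $S_i$ dihedral; at that point the Gorenstein–Walter classification identifies $T$ as $\PSL(2,q)$ with $q$ odd and $q>3$, or $\A_7$. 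Making this final reduction rigorous, that is, converting ``maximal self-normalizing nilpotent Sylow $2$-subgroup'' into the dihedral conclusion, is the principal difficulty of the argument; the earlier reductions, by contrast, follow cleanly from \Cref{cor:nilp}, \Cref{thm:rose}, and the Kegel–Wielandt theorem.
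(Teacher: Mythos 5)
This statement is Baumann's theorem: the paper does not prove it, but imports it verbatim from \cite{Baumann}, so the question is whether your argument could stand as an independent proof, and it cannot. Your chain of reductions is correct and cleanly executed: quotienting by the hypercenter to reach $Z(G)=1$, using \cref{cor:nilp} to see that the Sylow $2$-subgroup of $M$ is nontrivial (indeed of class at least $3$), \cref{thm:rose} to conclude $M\in\mathrm{Syl}_2(G)$, Kegel--Wielandt to force $L\le M$ (so $L=O_2(G)$) and then $\Fit(G/L)=1$, and the socle argument yielding a unique minimal normal subgroup $K/L$ with all factors isomorphic and $G/K$ a $2$-group. But the assertion that the common simple factor $T$ has \emph{dihedral} Sylow $2$-subgroups --- which is the entire mathematical content of Baumann's theorem --- is never proved: your final paragraph is a plan, not an argument, as you yourself concede.

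Two concrete points show this gap is not cosmetic. First, the leverage you invoke, ``the nilpotence of $\bar S$ imposes very strong control of the fusion of involutions,'' is vacuous: by this stage $\bar S$ is a $2$-group, and every $2$-group is nilpotent, so the only surviving hypothesis is that $\bar S$ is a maximal (equivalently self-normalizing) Sylow $2$-subgroup of $\bar G$; transferring that global condition into $2$-local information about a single factor $T$ is itself nontrivial, especially when $m>1$ or $G/K\neq 1$, since maximality of $\bar S$ in $\bar G$ imposes no obvious maximality-type condition on $S_i$ inside $T$. Second, Brauer--Suzuki only disposes of generalized quaternion Sylow $2$-subgroups, which never occur in a nonabelian simple group anyway; what must actually be excluded are the abelian types (as in $\PSL(2,2^n)$ with $n\geq 3$, the Ree groups, or the smallest Janko group), and the semidihedral and wreathed types (as in $M_{11}$ and suitable $\PSL(3,q)$ and unitary groups), and nothing in your sketch addresses these. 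Ruling them out under your hypotheses is precisely the delicate $2$-local analysis that occupies Baumann's paper; it is not a corollary of the ingredients you list. In the context of the present paper the correct move is the one the authors make: cite \cite{Baumann} for this statement and then \cref{thm:gw} to identify $T$ as $\PSL(2,q)$, $q$ odd and $q\geq 5$, or $\A(7)$.
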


On the other side, the groups with dihedral Sylow $2$-subgroups have been characterized in \cite{GW}. As a consequence we have the following.

\begin{thm}[\cite{GW}]\label{thm:gw}
If $G$ is a simple group with dihedral Sylow $2$-subgroups, then either $G$ is isomorphic to the projective special linear group $\PSL(2, q)$, $q$ odd and $q \geq 5$, or $G$ is isomorphic to the alternating group $\A(7)$. 
\end{thm}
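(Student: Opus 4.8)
This is the Gorenstein--Walter classification of finite simple groups with dihedral Sylow $2$-subgroups, and the plan is to combine $2$-local (fusion) analysis with modular character theory, reducing the bulk of the work to a centralizer-of-involution characterization of $\PSL(2,q)$. Since $G$ is non-abelian simple it has even order, so its odd core $O(G)$ is a proper normal subgroup and hence trivial; this removes the need for any signalizer-functor machinery and lets us work purely with the $2$-local structure. Writing $|S| = 2^n$ for a dihedral Sylow $2$-subgroup $S$, I would first dispose of the small cases by hand: $n \le 1$ is impossible, the Klein four case $n = 2$ is the degenerate (abelian) dihedral situation handled separately, and the order-$8$ case $S \cong D_8$ already produces $\A(5) = \PSL(2,5)$, $\PSL(2,7)$, $\A(6) = \PSL(2,9)$ and $\A(7)$.

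First I would analyze the $2$-fusion. For $n \ge 3$ the group $D_{2^n}$ has a unique central involution $z$, a cyclic maximal subgroup, and two $S$-classes of non-central (reflection) involutions. Using Alperin's fusion theorem I would reduce $G$-fusion to fusion visible in $N_G(S)$ and in $N_G(\langle z\rangle)$, and then the focal-subgroup theorem together with the simplicity of $G$ forces $z$ to be $G$-conjugate to the reflections, so that $G$ has a single class of involutions. (This is consistent with both targets: $\PSL(2,q)$ and $\A(7)$ each have exactly one class of involutions.) The resulting constraints on $N_G(S)$ pin down the normalizer structure and the possible fusion patterns up to a short list.

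Next, the heart of the argument is to determine the centralizer $C = C_G(z)$ of a central involution. The target, read off from $\PSL(2,q)$ (where an involution centralizer is dihedral of order $q \pm 1$) and from $\A(7)$, is that $C$ is $2$-constrained with a soluble, essentially dihedral shape. I would control this through Brauer's theory of the principal $2$-block: it has defect group $S$, so a block of dihedral defect, and Brauer's determination of the number and degrees of the ordinary and modular irreducible characters in such a block --- organized via the exceptional characters attached to the two families of involutions --- tightly restricts the character degrees and hence $|C|$ and $|G|$. Matching these numerical possibilities isolates a finite list of candidate orders.

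Finally, once $C = C_G(z)$ is shown to be a dihedral group of order $q \pm 1$, I would invoke the Brauer--Suzuki--Wall characterization of $\PSL(2,q)$ by the isomorphism type of an involution centralizer together with the group order to conclude $G \cong \PSL(2,q)$; the one remaining fusion pattern, in which the involution centralizer is not dihedral, is identified directly with $\A(7)$. \emph{The main obstacle} is the modular-representation-theoretic step: controlling the ordinary and Brauer characters lying in a $2$-block of dihedral defect and excluding every spurious degree configuration. This is exactly the technical core that makes the theorem one of the genuinely hard landmark results in the local theory of finite groups, and a fully self-contained proof is long and resists any reduction to routine computation.
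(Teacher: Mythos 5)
You should first note that the paper does not prove this statement at all: it is quoted, with attribution, as the Gorenstein--Walter classification \cite{GW}, and is used in the paper purely as a black box (to identify the simple factors arising in Theorem~\ref{thm:bau}). So there is no proof in the paper to compare yours against; the only honest comparison is with the actual Gorenstein--Walter argument, whose architecture your sketch does reproduce correctly in broad strokes (fusion analysis giving a single class of involutions, analysis of an involution centralizer via exceptional characters and blocks with dihedral defect group, and the Brauer--Suzuki--Wall characterization of $\PSL(2,q)$ at the end, with $\A(7)$ as the residual configuration).

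As a proof, however, your proposal has genuine gaps beyond being an outline. First, the claim that $O(G)=1$ ``removes the need for any signalizer-functor machinery and lets us work purely with the $2$-local structure'' is wrong: triviality of the odd core of $G$ says nothing about the odd cores $O(C_G(z))$ of involution centralizers, and showing that these cores can be controlled (so that $C_G(z)$ really is essentially dihedral rather than dihedral ``modulo core'') is precisely the hardest part of the Gorenstein--Walter papers; it is the step that Bender's later treatment was invented to streamline. Your plan silently assumes the centralizer is close to dihedral once the character-theoretic bookkeeping is done, which begs the main question. Second, the small cases are not disposed of correctly: $\A(5)=\PSL(2,5)$ has Klein four (order $4$) Sylow $2$-subgroups, not $D_8$, so it does not belong to your order-$8$ list; and the Klein four case itself, which you set aside as ``degenerate'' and ``handled separately,'' is a hard classification in its own right (its conclusion --- $\PSL(2,q)$ with $q\equiv\pm3\pmod 8$ --- is needed for the theorem as stated, since those groups are among the targets) and no argument for it is offered. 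Third, the block-theoretic core --- excluding all spurious degree configurations in a block with dihedral defect group --- is exactly where the hundreds of pages live, and you acknowledge but do not bridge this. Given that the paper itself only cites the result, the appropriate ``proof'' here is the citation; an outline that defers the same technical core cannot substitute for it.
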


%%%%%%%%%%%%%%%%%%%%%% SECTION

\section{Proof of Theorem A}\label{A}

\noindent We start with the following crucial observation.

\begin{lem}\label{lem:nilpk}
Let $G$ be a group, $x \in G$ and $k \geq 2$. If $[u_1, \ldots, u_k]=1$ for every $u_1, \ldots, u_k \in \Sol_G(x)$, then $\Sol_G(x)$ is a subgroup. Moreover it is nilpotent of class at most $k-1$.
\end{lem}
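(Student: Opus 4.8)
The plan is to first prove that the generated subgroup $\langle \Sol_G(x)\rangle$ is nilpotent of class at most $k-1$, and then to deduce from part $(b)$ of \Cref{lem:property} that it actually coincides with $\Sol_G(x)$. Write $S=\Sol_G(x)$ and $H=\langle S\rangle$. Since $x\in S$, the subgroup $H$ contains $x$; if I manage to show that $H$ is soluble, then $H$ is one of the soluble subgroups containing $x$ whose union is $S$, so $H\subseteq S$. As the reverse inclusion is trivial, this forces $S=H$, i.e. $\Sol_G(x)$ is a subgroup. Once this is known, the hypothesis says exactly that every long commutator of weight $k$ in the elements of the group $H=S$ is trivial, so $\gamma_k(H)=1$ and $H$ is nilpotent of class at most $k-1$ by the very definition of the lower central series. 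Thus everything reduces to proving that $H$ is nilpotent, hence soluble.

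The heart of the argument is therefore the following purely group-theoretic claim: if a group $H$ is generated by a subset $S$ and $[s_1,\dots,s_k]=1$ for all $s_1,\dots,s_k\in S$, then $\gamma_k(H)=1$. I would prove this by induction on $k$. For $k=2$ the hypothesis says that the generators of $H$ commute pairwise, whence each $s\in S$ centralises $S$ and therefore centralises $H=\langle S\rangle$; thus $H$ is abelian and $\gamma_2(H)=1$. For the inductive step, fix $k\ge 3$ and consider the subgroup
\[
N=\bigl\langle\, [s_1,\dots,s_{k-1}] : s_1,\dots,s_{k-1}\in S \,\bigr\rangle .
\]
By hypothesis each generator $[s_1,\dots,s_{k-1}]$ of $N$ commutes with every $s\in S$, and since $S$ generates $H$ this means $[s_1,\dots,s_{k-1}]\in Z(H)$; hence $N\le Z(H)$, and in particular $N\trianglelefteq H$.

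Passing to the quotient $\overline H=H/N$, the images of the elements of $S$ form a generating set $\overline S$ of $\overline H$ for which every weight $k-1$ long commutator is trivial, because $[s_1,\dots,s_{k-1}]\in N$ by construction. The induction hypothesis applied to $\overline H$ and $\overline S$ yields $\gamma_{k-1}(\overline H)=1$, that is $\gamma_{k-1}(H)\subseteq N\subseteq Z(H)$. Consequently
\[
\gamma_k(H)=[\gamma_{k-1}(H),H]\subseteq [Z(H),H]=1,
\]
which completes the induction and shows that $H$ is nilpotent of class at most $k-1$. Feeding this back into the reduction above gives the statement. The main obstacle is precisely this inductive claim: the subtlety is that the hypothesis controls only the commutators of the generators, so one must upgrade it to all of $H$, and the device that makes the induction go through is collecting the weight $k-1$ commutators into the central (hence normal) subgroup $N$ and reducing the weight by passing to $H/N$.
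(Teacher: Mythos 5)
Your proof is correct, but it takes a genuinely different route from the paper's. The paper argues locally and leans on a citation: for $y,z \in \Sol_G(x)$ it forms $L=\langle x,y,z\rangle$, invokes the standard fact that $\gamma_k(L)$ is generated by the long commutators of weight at least $k$ in the generators and their inverses (\cite[2.1.5]{Khukhro}), concludes $\gamma_k(L)=1$, so that $L$ is a nilpotent (hence soluble) subgroup containing $x$; part $(b)$ of Lemma~\ref{lem:property} then puts $yz$ back in $\Sol_G(x)$, giving closure, and the same cited fact applied to $\Sol_G(x)$ itself yields the class bound. You instead work globally with $H=\langle \Sol_G(x)\rangle$ and prove the needed commutator statement from scratch: the weight-$(k-1)$ commutators of generators are central, because their commutators with every generator vanish and centralizers are subgroups, so they may be factored out and the weight reduced, giving an induction on $k$; both arguments then finish the same way via part $(b)$ of Lemma~\ref{lem:property}. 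What your version buys: it is self-contained (no appeal to \cite{Khukhro}), and it only ever needs commutators with entries in $\Sol_G(x)$ itself rather than in $\Sol_G(x)$ together with the inverses of its elements, precisely because centrality is verified through centralizers. What the paper's version buys: brevity, given the reference. One small wording issue in your write-up: the remark that $\gamma_k(H)=1$ follows ``by the very definition of the lower central series'' once $H=S$ is known is not literally correct, since elements of $\gamma_{k-1}(H)$ are products of long commutators and their inverses, not single long commutators; but this step is redundant in your argument anyway, because your inductive claim applied to the generating set $S$ already gives $\gamma_k(H)=1$, and hence the class bound, directly.
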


\begin{proof}
To prove that $\Sol_G(x)$ is a subgroup, consider elements $y,z \in \Sol_G(x)$, and write $L=\langle x,y,z \rangle$. Then $\gamma_k(L)$ is generated by long commutators of weight at least $k$ with entry set $\{x,y,z,x^{-1},y^{-1},z^{-1}\}$ (see, for instance, \cite[2.1.5]{Khukhro}), and the latter are trivial by hypothesis. Thus $\gamma_k(L)=1$.
On the other hand, $\langle yz , x\rangle \leq L$ which is nilpotent of class at most $k-1$. Then $\Sol_G(x)$ is a subgroup and the result follows.
\end{proof}

Now we are in a position to prove Theorem~{A} stated in Section~\ref{I}.
\begin{thmA}
Let $G$ be a finite group. Then $G$ is nilpotent of class at most $2$ if and only if there exists an element $x \in G$ such that $[u_1,u_2,u_3]=1$ for every $u_1,u_2,u_3 \in \Sol_G(x)$.
\end{thmA}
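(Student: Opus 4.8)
The forward implication is immediate: if $G$ is nilpotent of class at most $2$ then $\gamma_3(G)=1$, so $[u_1,u_2,u_3]=1$ already holds for \emph{all} $u_1,u_2,u_3\in G$, and we may take $x$ to be any element of $G$. The whole content lies in the converse, and the plan is to show that the hypothesis forces $\Sol_G(x)=G$; since $\Sol_G(x)$ will turn out to be nilpotent of class at most $2$, this yields the conclusion at once.

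First I would apply Lemma~\ref{lem:nilpk} with $k=3$ to the given $x$. This shows that $S:=\Sol_G(x)$ is a subgroup of $G$ and that it is nilpotent of class at most $2$. In particular every Sylow subgroup of $S$ inherits class at most $2$, so the Sylow $2$-subgroup of $S$ has class at most $2$; this is exactly the hypothesis required to feed into Corollary~\ref{cor:nilp}.

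The key step is then to verify that $S$ is not properly contained in any soluble subgroup of $G$. This follows from Lemma~\ref{lem:property}$(b)$: if $H$ is a soluble subgroup of $G$ with $S\leq H$, then $x\in S\leq H$, so $H$ is a soluble subgroup containing $x$, whence $H\subseteq\Sol_G(x)=S$ and therefore $H=S$. With this in hand, $S$ is a nilpotent subgroup whose Sylow $2$-subgroup has class at most $2$ and which is maximal among the soluble subgroups of $G$, so Corollary~\ref{cor:nilp} applies and gives $G=S=\Sol_G(x)$. Being equal to $S$, the group $G$ is nilpotent of class at most $2$, completing the argument.

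The step I expect to carry the real weight is the appeal to Corollary~\ref{cor:nilp}: a priori nothing prevents $G$ from being insoluble, and it is precisely Janko's theorem (Theorem~\ref{thm:janko}), channelled through Corollary~\ref{cor:nilp}, that rules this out. The class-at-most-$2$ hypothesis on the long commutators is essential here, since it is what bounds the class of the Sylow $2$-subgroup of $S$ and thereby licenses Janko's criterion; without such control on the $2$-part the conclusion would no longer be forced, which is consistent with Question~\ref{q1} being genuinely more delicate for $k\geq 4$, where $S$ may only be nilpotent of class up to $k-1$.
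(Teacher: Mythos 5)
Your proof is correct and follows essentially the same route as the paper: apply Lemma~\ref{lem:nilpk} with $k=3$ to get that $\Sol_G(x)$ is a nilpotent subgroup of class at most $2$, observe via Lemma~\ref{lem:property}$(b)$ that it cannot be properly contained in any soluble subgroup, and then invoke Corollary~\ref{cor:nilp} (Janko's theorem) to conclude $G=\Sol_G(x)$. In fact you spell out two points the paper leaves implicit --- the maximality of $\Sol_G(x)$ among soluble subgroups and the bound on the class of its Sylow $2$-subgroup --- so nothing further is needed.
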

\begin{proof}
By applying Lemma~\ref{lem:nilpk} for $k=3$, we get that $\Sol_G(x)$ is a nilpotent subgroup of class at most $2$.
Moreover, $R(G)=\Fit(G)$ as $R(G)$ is a subgroup of $\Sol_G(x)$.
Since $\Sol_G(x)$ is not properly contained in any soluble subgroup of $G$, Corollary~\ref{cor:nilp} yields that $G=\Sol_G(x)$, and we are done.
\end{proof}

It is not clear whether the answer to Question~\ref{q1} is in the affirmative for $k>3$. Nevertheless, if it is not the case, the structure of a minimal counterexample is subject to significant restrictions as showed in the following.

\begin{rem}
Let $k\geq 4$, and assume that $G$ is a minimal (with respect to the order) insoluble group having an element $x$ such that $[u_1, \ldots,u_k]=1$ for every $u_1,\ldots,u_k \in \Sol_G(x)$. Then $G$ has a unique minimal normal subgroup $K = S \times \cdots \times S$ where $S$ is  isomorphic either to $\PSL(2, q)$, $q$ odd and $q \geq 5$, or to the alternating group $\A(7)$. Furthermore, $G = K\langle x \rangle $ and $\Sol_G(x)$ is a Sylow $2$-subgroup of $G$.
\end{rem}

\begin{proof}
By Lemma~\ref{lem:nilpk}, $\Sol_G(x)$ is a subgroup of $G$ and it is nilpotent of class at most $k-1$. Hence $R(G)=\Fit(G)$. 
Set $T=\Sol_G(x)$. Since $G$ is not soluble, $T$ is a maximal subgroup by part $(b)$ of Lemma~\ref{lem:property}.
If $R(G) \neq 1$ then $G/R(G)$ is a group of smaller order with $\Sol_{G/R(G)}(xR(G))$ satisfying our hypotheses. This implies that $G/R(G)$ is  soluble, giving the contradiction that $G$ is soluble. Therefore $R(G)=\Fit(G)=Z(G)=1$. 
Now Theorem~\ref{thm:rose} implies that $T$ is a Sylow $2$-subgroup of $G$ of order at most $2^{k}$, and Theorem~\ref{thm:bau} yields the existence of a unique minimal normal subgroup $K$ of $G$ such that $K= S \times \cdots \times S$, where $S$ is a non-abelian simple group with dihedral Sylow $2$-subgroups, and $G/K$ is a $2$-group. Therefore, Theorem~\ref{thm:gw} implies that either $S$ is  isomorphic to $\PSL(2, q)$, $q$ odd and $q \geq 5$, or it is isomorphic to the alternating group $\A(7)$. 

By \cite[Theorem 2.13]{isaacs} $x$ is not an involution. Let $H= \langle x, K \rangle$. We claim that $G=H$. Indeed, if $H$ is a proper subgroup of $G$, then $\Sol_H(x)=H \cap \Sol_G(x)$ satisfies our hypotheses, so $H$ is soluble by the minimality of $G$. Therefore $G= K \langle x \rangle$.
Let $P= T \cap K$ be a Sylow $2$-subgroup of $K$. By the Dedekind's Modular Law we have
\[
T= G \cap T =  \langle x \rangle K  \cap T = \langle x \rangle (K  \cap T)= \langle x \rangle P.
\]
By Theorem~\ref{thm:bau}, $P$ is a direct product of dihedral groups.
\end{proof}

%%%%%%%%%%%%%%%%%%%%%% SECTION

\section{Proof of Theorem B}\label{B}
\noindent In this section we deal with arithmetic questions related to the solubilizer of an element in a finite group.

\begin{lem}
Let $G$ be an insoluble group and let $x$ be an element of $G$ such that $\Sol_G(x)$ is a subgroup. Then $|\Sol_G(x)| \neq p^n$ for all odd primes $p$ and all positive integers $n$.
\end{lem}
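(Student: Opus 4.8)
The plan is to argue by contradiction and invoke the odd-order case of Corollary~\ref{cor:nilp}. Suppose $|\Sol_G(x)| = p^n$ for some odd prime $p$ and positive integer $n$, and write $T = \Sol_G(x)$, which is a subgroup by hypothesis. Since $|T|$ is a power of $p$, the subgroup $T$ is a $p$-group, hence nilpotent, and in particular soluble.

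The key step is to observe that $T$ is maximal among the soluble subgroups of $G$, in the sense that it is not properly contained in any soluble subgroup. First note that $x \in \langle x \rangle \subseteq T$ by part $(a)$ of Lemma~\ref{lem:property}. Now suppose $T \subsetneq H$ with $H$ a soluble subgroup of $G$. Then $H$ is a soluble subgroup containing $x$, so part $(b)$ of Lemma~\ref{lem:property} gives $H \subseteq \Sol_G(x) = T$, contradicting $T \subsetneq H$. Hence no such $H$ exists.

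With this in hand, I would apply Corollary~\ref{cor:nilp} to the subgroup $M = T$. Indeed $T$ is nilpotent, and since $|T| = p^n$ with $p$ odd, the Sylow $2$-subgroup of $T$ is trivial and therefore has class at most $2$; moreover $T$ is not properly contained in any soluble subgroup of $G$ by the previous paragraph. The corollary then forces $G = T$. But $T$ is a $p$-group, hence soluble, so $G$ would be soluble, contradicting the hypothesis that $G$ is insoluble. This completes the argument.

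There is no substantial obstacle here once one recognizes the two ingredients that drive the proof: the identification of $\Sol_G(x)$ as a maximal soluble subgroup via Lemma~\ref{lem:property}, and the fact that an odd-order subgroup automatically has trivial (hence class at most $2$) Sylow $2$-subgroup, placing us exactly in the situation handled by Corollary~\ref{cor:nilp}. The only point requiring genuine care is the maximality verification, which crucially uses that $\Sol_G(x)$ is \emph{assumed} to be a subgroup, so that it coincides with the union of soluble subgroups described in part $(b)$ of Lemma~\ref{lem:property}.
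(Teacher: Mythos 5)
Your proof is correct and follows essentially the same route as the paper: both argue by contradiction, observe that $\Sol_G(x)$ is a nilpotent ($p$-)subgroup of odd order not properly contained in any soluble subgroup of $G$, and apply Corollary~\ref{cor:nilp} to conclude $G = \Sol_G(x)$, contradicting insolubility. Your write-up simply makes explicit the maximality verification via part $(b)$ of Lemma~\ref{lem:property}, which the paper states without detail.
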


\begin{proof}
Arguing by contradiction, assume that  $|\Sol_G(x)|= p^n$ for some odd prime $p$ and a positive integer $n$. Then $\Sol_G(x)$
is a Sylow $p$-subgroup of $G$. Moreover, $\Sol_G(x)$ is not properly contained in any soluble subgroup of $G$. From Corollary~\ref{cor:nilp} it follows that $G=\Sol_G(x)$, giving the contradiction that $G$ is soluble.
\end{proof}

The following result shows that the solubilizer of an element of prime order has to be large enough when it is not equal to the normalizer.

\begin{lem}\label{lem:bob}
Let $G$ be a finite group, and let $x \in G$ be an element of prime order $p$. If $P=\langle x \rangle $ and $|\Sol_G(x)|\leq p^2$, then $\Sol_G(x)=N_G(P)$.
\end{lem}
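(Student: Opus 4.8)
The plan is to prove the two inclusions $N_G(P)\subseteq \Sol_G(x)$ and $\Sol_G(x)\subseteq N_G(P)$ separately. The first one is immediate and costs nothing: since $P=\langle x\rangle$, part $(a)$ of Lemma~\ref{lem:property} already gives $N_G(P)=N_G(\langle x\rangle)\subseteq \Sol_G(x)$, with no use of the cardinality hypothesis. So the entire content of the statement lies in the reverse inclusion, and the bound $|\Sol_G(x)|\le p^2$ must be exactly what forces it.

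For the reverse inclusion I would exploit the description in part $(b)$ of Lemma~\ref{lem:property}, namely that $\Sol_G(x)$ is the union of all soluble subgroups of $G$ containing $x$. It therefore suffices to show that every soluble subgroup $H$ with $x\in H$ satisfies $H\subseteq N_G(P)$, i.e.\ that $P\trianglelefteq H$. The key observation is that any such $H$ is itself contained in $\Sol_G(x)$ (for $y\in H$ the subgroup $\langle x,y\rangle\le H$ is soluble, so $y\in\Sol_G(x)$), whence $|H|\le|\Sol_G(x)|\le p^2$; moreover $p=|P|$ divides $|H|$ because $P\le H$. I would then run a short order analysis. If $|H|=p^2$ then $H$ is a $p$-group, hence abelian, and $P\trianglelefteq H$. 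Otherwise $|H|=pm$ with $1\le m\le p-1$, so that $P$, having order $p$, is a Sylow $p$-subgroup of $H$; writing $n_p$ for the number of its conjugates, Sylow's theorem gives $n_p\equiv 1\pmod p$ and $n_p\mid m$, and since $m<p$ this forces $n_p=1$, i.e.\ $P\trianglelefteq H$. In either case $H\le N_G(P)$.

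Taking the union over all soluble subgroups $H\ni x$ and invoking part $(b)$ of Lemma~\ref{lem:property} then yields $\Sol_G(x)\subseteq N_G(P)$, and combined with the first inclusion this gives the claimed equality $\Sol_G(x)=N_G(P)$. I do not anticipate a serious obstacle: the argument is uniform regardless of whether $G$ is soluble, it does not require $\Sol_G(x)$ to be a subgroup, and it avoids the heavier machinery of Corollary~\ref{cor:nilp}. The only delicate point is the elementary Sylow count—checking that $n_p\equiv 1\pmod p$ together with $n_p\mid m$ and $m\le p-1$ leaves $n_p=1$ as the sole possibility—while in the remaining case $|H|=p^2$ the normality of $P$ is automatic from the abelianness of $H$.
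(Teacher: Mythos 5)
Your proposal is correct and follows essentially the same route as the paper's own proof: one inclusion from part $(a)$ of Lemma~\ref{lem:property}, reduction via part $(b)$ to showing every soluble subgroup $H$ containing $x$ (necessarily of order at most $p^2$) normalizes $P$, then the abelian $p$-group case and the Sylow-counting case $|H:P|<p$ forcing $N_H(P)=H$. The only cosmetic difference is that you split cases by $|H|=p^2$ versus $|H|=pm$ with $m<p$, while the paper splits by whether $H$ is a $p$-group; you also spell out why $|H|\le p^2$, which the paper leaves implicit.
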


\begin{proof}
Since $N_G(P) \subseteq \Sol_G(x)$ by part $(a)$ of Lemma~\ref{lem:property}, we only need to prove the reverse inclusion.
 As $\Sol_G(x)$ is the union of all soluble subgroups of $G$ containing $x$, it suffices to show that
if $H$ is a soluble subgroup containing $x$, then $H \leq N_G(P)$. Let $H$ be a soluble subgroup containing $x$. Then $|H| \leq p^2$. If $H$ is a $p$-group, then $H$ is abelian, and so $H \leq N_G(P)$. Therefore assume that $H$ is not a $p$-group. It follows that $|H : P| < p$, and $|H:N_H(P)|$ is congruent to $1$ modulo $p$ because  $P$ is a Sylow $p$-subgroup of $H$. Hence, $H = N_H(P) \leq N_G(P)$. This yields $\Sol_G(x)  \leq  N_G(P)$ as desired.
\end{proof}

Now we are in a position to prove Theorem~{B} stated in Section~\ref{I}.
\begin{thmB}
Let $G$ be an insoluble group and $x$ an element of $G$. Then the cardinality of $\Sol_G(x)$ cannot be equal to $p^2$ for any prime $p$.
\end{thmB}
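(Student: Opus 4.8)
The plan is to argue by contradiction: assume $G$ is insoluble and $|\Sol_G(x)| = p^2$ for some prime $p$, and show that in every admissible case $\Sol_G(x)$ is forced to be a subgroup, after which \Cref{cor:nilp} collapses $G$ onto it and contradicts insolubility.

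First I would pin down $o(x)$. By part $(c)$ of \Cref{lem:property}, $o(x)$ divides $|\Sol_G(x)| = p^2$, so $o(x) \in \{1, p, p^2\}$. If $o(x) = 1$ then $x = 1$ and $\Sol_G(x) = G$, forcing $|G| = p^2$ and hence $G$ soluble, a contradiction, so this case is vacuous. If $o(x) = p^2$, then $\langle x\rangle \subseteq \Sol_G(x)$ by part $(a)$ of \Cref{lem:property} while both sets have cardinality $p^2$, whence $\Sol_G(x) = \langle x\rangle$ is a cyclic subgroup. Finally, if $o(x) = p$, set $P = \langle x\rangle$; since $|\Sol_G(x)| = p^2 \leq p^2$, \Cref{lem:bob} gives $\Sol_G(x) = N_G(P)$, which is a subgroup. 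Thus in every case that is not immediately contradictory, $\Sol_G(x)$ is a subgroup of order $p^2$.

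Next I would feed this into \Cref{cor:nilp} with $M = \Sol_G(x)$. A group of order $p^2$ is abelian, hence nilpotent, and its Sylow $2$-subgroup is either trivial (when $p$ is odd) or the whole group of order $4$ (when $p = 2$); in both cases it has class at most $2$. Moreover, by part $(b)$ of \Cref{lem:property}, $\Sol_G(x)$ is the union of all soluble subgroups of $G$ containing $x$, so any soluble subgroup of $G$ containing $\Sol_G(x)$ also contains $x$ and is therefore contained in $\Sol_G(x)$; hence $M = \Sol_G(x)$ is not properly contained in any soluble subgroup. \Cref{cor:nilp} then yields $G = \Sol_G(x)$, a group of order $p^2$, contradicting the insolubility of $G$.

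The only delicate point is the reduction to ``$\Sol_G(x)$ is a subgroup,'' and within it the case $o(x) = p$, which is precisely what \Cref{lem:bob} was tailored to cover. I expect the main obstacle to be conceptual rather than computational: one must notice that the prime $p = 2$, which was excluded from the odd-order argument of the preceding lemma, is nonetheless handled here, because a group of order $4$ has Sylow $2$-subgroup of class at most $2$, so \Cref{cor:nilp} (and ultimately Janko's theorem, \Cref{thm:janko}) applies uniformly to all primes. In particular no separate analysis of $p = 2$ via the classification of simple groups with dihedral Sylow $2$-subgroups is required, and the hypothesis $R(G) \neq 1$ of the earlier result plays no role.
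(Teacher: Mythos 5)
Your proof is correct, but it concludes along a genuinely different line from the paper's. Both arguments hinge on \Cref{lem:bob}; the difference is in how the final contradiction is extracted. The paper splits on whether $p^2$ divides $|G|$: if it does, a Sylow $p$-subgroup $P$ containing $x$ is a soluble subgroup containing $x$, so $P\subseteq \Sol_G(x)$ forces $P=\Sol_G(x)$ to have order exactly $p^2$; such a group is abelian, so the elements of $\Sol_G(x)$ commute pairwise and $G$ is abelian by \cite[Theorem~1.2]{ALMM}, a contradiction. If instead $p^2\nmid|G|$, then $\langle x\rangle$ is a Sylow $p$-subgroup of order $p$, \Cref{lem:bob} makes $\Sol_G(x)=N_G(\langle x\rangle)$ a subgroup of order $p^2$, and Lagrange's theorem kills this case at once, with no further machinery. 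You instead split on $o(x)\in\{1,p,p^2\}$, show that $\Sol_G(x)$ is a subgroup in every surviving case (either $\langle x\rangle$ itself, or $N_G(\langle x\rangle)$ via \Cref{lem:bob}), and then invoke \Cref{cor:nilp} uniformly --- exactly the mechanism the paper uses to prove Theorem~A. The paper's route buys brevity: once the solubilizer is identified with $N_G(\langle x\rangle)$, the order count finishes the argument without any appeal to Janko's theorem. Your route buys uniformity and self-containment: a single concluding device handles all cases, you avoid the external reference to \cite[Theorem~1.2]{ALMM}, and you treat the degenerate cases $o(x)=1$ and $o(x)=p^2$ explicitly, which the paper passes over silently. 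Both are complete proofs of the statement.
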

\begin{proof}
Arguing by contradiction, assume that $|\Sol_G(x)|=p^2$ for some prime $p$. It follows that $o(x)$ divides $p^2$, and $x \in P$ for some Sylow $p$-subgroup $P$ of $G$. Clearly, $p^2$ does not divide $|G|$, otherwise $P = \Sol_G(x)$ and $G$ is abelian by \cite[Theorem~1.2]{ALMM}. 
Therefore, we can assume that $P= \langle x \rangle $ has cardinality $p$. By Lemma~\ref{lem:bob} we get $\Sol_G(x)=N_G(P)$, which is a contradiction because there are no subgroups of cardinality $p^2$ in $G$. The proof is complete.
\end{proof}

As a consequence of Theorem~B we have the following.

\begin{cor}\label{cor:cubo}
Let $G$ be an insoluble group and $x$ an element of $G$. If $R(G) \neq 1$, then $|\Sol_G(x)| \neq p^3$ for all primes $p$.
\end{cor}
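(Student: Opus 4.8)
The plan is to reduce to a quotient with trivial soluble radical and then invoke Theorem~B together with the prime-cardinality result of \cite{ALMM}. Arguing by contradiction, I would suppose $|\Sol_G(x)| = p^3$ for some prime $p$. Since $R(G)$ is a nontrivial normal soluble subgroup contained in $\Sol_G(x)$ by part~$(a)$ of Lemma~\ref{lem:property}, and $\Sol_G(x)$ is a union of cosets of $R(G)$ by Lemma~\ref{quotient}, the order $|R(G)|$ divides $|\Sol_G(x)| = p^3$. Hence $|R(G)| \in \{p, p^2, p^3\}$, so in particular $R(G)$ is a $p$-group.

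Next I would pass to $\overline{G} = G/R(G)$, which is again insoluble (an extension of a soluble group by a soluble group would be soluble) and whose soluble radical is trivial. Writing $\overline{x} = xR(G)$, Lemma~\ref{quotient} gives
\[
|\Sol_{\overline{G}}(\overline{x})| = \frac{|\Sol_G(x)|}{|R(G)|} = \frac{p^3}{|R(G)|} \in \{p^2, p, 1\}.
\]
It then remains only to exclude each of these three values for the solubilizer of $\overline{x}$ in the insoluble group $\overline{G}$.

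Finally, the three cases are disposed of using the results already available. If $|\Sol_{\overline{G}}(\overline{x})| = 1$, then $\overline{x}$ is the identity of $\overline{G}$, so $x \in R(G)$; but then $\Sol_G(x) = G$ and $G$ is soluble, a contradiction. If $|\Sol_{\overline{G}}(\overline{x})| = p$, this contradicts the fact, proved in \cite{ALMM}, that the solubilizer of an element in an insoluble group cannot have prime cardinality. If $|\Sol_{\overline{G}}(\overline{x})| = p^2$, this directly contradicts Theorem~B applied to $\overline{G}$ and $\overline{x}$. Since all three possibilities are ruled out, no such prime $p$ exists, which proves the corollary. I expect no genuine obstacle here: the argument is essentially a bookkeeping reduction once Theorem~B is in hand. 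The only point requiring a little care is the verification that $|R(G)|$ is forced to be a power of $p$, so that the order of the quotient solubilizer lands precisely in the range already covered by Theorem~B and the prime case of \cite{ALMM}.
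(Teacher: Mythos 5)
Your proof is correct and follows essentially the same route as the paper: reduce modulo $R(G)$ via Lemma~\ref{quotient} and rule out the possible orders of $\Sol_{G/R(G)}(xR(G))$ using Theorem~B and the prime-cardinality result of \cite{ALMM}. You are in fact slightly more explicit than the paper, which states only that the quotient solubilizer has order $p$ or $p^2$ and leaves the (trivial) case $|\Sol_{G/R(G)}(xR(G))|=1$ implicit.
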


\begin{proof}
Arguing by contradiction, assume that $|\Sol_G(x)| = p^3$ for some $x \in G$ and $p$ prime. Then, from Lemma~\ref{quotient} it follows that $\Sol_{G/R(G)}(xR(G))$ has order $p$ or $p^2$, which is a contradiction by Theorem~B and \cite[Corollary 3.2 $(b)$ and Corollary 3.3]{ALMM}.
\end{proof}

Actually we suspect that Corollary~\ref{cor:cubo} holds true also when $R(G)=1$. As a consequence of Theorem A we are able to prove the above conjecture for $p=2$.

\begin{pro}\label{pro:eight}
Let $G$ be an insoluble group. Then $|\Sol_G(x)| \neq 8$ for every element $x \in G$.
\end{pro}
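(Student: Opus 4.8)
The plan is to assume $|\Sol_G(x)|=8$ with $G$ insoluble and derive a contradiction, organizing everything around the order of a Sylow $2$-subgroup. First I note that $x\neq 1$, since $x=1$ would give $\Sol_G(x)=G$ and hence $|G|=8$, a soluble group. By Lemma~\ref{lem:property}(c), $o(x)$ divides $8$, so $x$ is a $2$-element and lies in some Sylow $2$-subgroup $P$. As $P$ is nilpotent and contains $x$, part~$(b)$ of Lemma~\ref{lem:property} gives $P\subseteq\Sol_G(x)$, whence $|G|_2=|P|\in\{2,4,8\}$. (If one prefers, the subcase $R(G)\neq 1$ can be removed at the outset by Corollary~\ref{cor:cubo}, but the case analysis below does not require this reduction.)

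The argument then splits according to $|P|$. If $|P|=8$, then $P\subseteq\Sol_G(x)$ and both have order $8$, so $\Sol_G(x)=P$ is a \emph{subgroup} of order $8$; every group of order $8$ is nilpotent of class at most $2$, so $[u_1,u_2,u_3]=1$ for all $u_1,u_2,u_3\in\Sol_G(x)$, and Theorem~A forces $G$ to be nilpotent of class at most $2$, contradicting insolubility. If $|P|\le 4$ and $P$ is cyclic (this covers $|P|=2$ and $|P|\cong C_4$), then $G$ has a normal $2$-complement by the classical Burnside/Frobenius normal $p$-complement theory; this complement has odd order and is soluble by the Feit--Thompson theorem, so $G$ is soluble, again a contradiction. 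Note that in the remaining possibility $|P|=4$ a subgroup of order $8$ would have $2$-part at most $4$, which is impossible; thus in that case $\Sol_G(x)$ is \emph{not} a subgroup and Theorem~A is genuinely unavailable.

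The surviving case $|P|\cong C_2\times C_2$ is the crux. Here I would produce a soluble subgroup of order strictly larger than $8$ containing $x$. Since $G$ is insoluble it is not $2$-nilpotent, so by Frobenius's normal $p$-complement theorem there is a $2$-subgroup $Q$ with $N_G(Q)/C_G(Q)$ not a $2$-group; as every proper nontrivial subgroup of the Klein four group $P$ has order $2$ and hence trivial automizer, the only possibility is $Q=P$, so $3$ divides $|N_G(P)/C_G(P)|\le|\mathrm{GL}(2,2)|=6$. An element $g$ of order $3$ in $N_G(P)$ induces a fixed-point-free automorphism of $P$ cyclically permuting its three involutions, so $\langle P,g\rangle=P\rtimes\langle g\rangle\cong\A(4)$. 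This $\A(4)$ is soluble of order $12$ and, replacing $x$ by a conjugate if necessary (all involutions being conjugate in this setting), we may take $x\in\A(4)$; then part~$(b)$ of Lemma~\ref{lem:property} gives $\A(4)\subseteq\Sol_G(x)$ and $|\Sol_G(x)|\ge 12>8$, the desired contradiction.

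The main obstacle is precisely this Klein four case, where $\Sol_G(x)$ fails to be a subgroup so that the clean Theorem~A reduction breaks down and one must instead exploit global structure. The Frobenius-transfer route above is the most economical resolution; conceptually it reflects the Gorenstein--Walter classification underlying Theorem~\ref{thm:gw}, according to which an insoluble group with Klein four Sylow $2$-subgroup and trivial soluble radical is $\PSL(2,q)$ for suitable odd $q\ge 5$, a group that always contains an $\A(4)$ meeting the class of $x$. The delicate points to verify carefully are that the order-$3$ automorphism of $P$ acts without nontrivial fixed points (so that $\langle P,g\rangle$ is genuinely $\A(4)$ and not a smaller group) and that $\A(4)$ does contain a conjugate of $x$.
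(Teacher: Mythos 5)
Your proof is correct in substance and follows essentially the same route as the paper's: split according to the Sylow $2$-subgroup $P$ containing $x$ (which lies in $\Sol_G(x)$, forcing $|P|\leq 8$), eliminate the cyclic case by normal $2$-complement theory plus Feit--Thompson, eliminate $|P|=8$ via Theorem~A, and eliminate the Klein four case via Frobenius' normal $p$-complement theorem. The one flaw is in the Klein four case: from $3\mid |N_G(P)/C_G(P)|$ you do get, by Cauchy, an element $g$ of order $3$ in $N_G(P)$, but nothing guarantees that \emph{this} element acts nontrivially on $P$; it may lie in $C_G(P)$ (the nontrivial automizer could be induced only by elements of order $9$, say, with cube in $C_G(P)$), so the claim that $g$ is fixed-point-free and $\langle P,g\rangle\cong \A(4)$ is unjustified --- indeed you flagged exactly this as the delicate point, and it cannot be verified from what you cite. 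Fortunately the flaw is immaterial: whether or not $g$ centralizes $P$, the subgroup $\langle P,g\rangle$ (either $P\rtimes\langle g\rangle$ or $P\times\langle g\rangle$) is soluble of order $12$ and contains $x\in P$, so $|\Sol_G(x)|\geq 12>8$ and the contradiction stands; your worry that $\langle P,g\rangle$ might be ``a smaller group'' is unfounded, and the conjugation of $x$ into $\A(4)$ is unnecessary since $x\in P\subseteq\langle P,g\rangle$ already. The paper sidesteps the issue entirely by extracting from Frobenius' theorem an element $b$ of \emph{odd order} in $N_G(H)\setminus C_G(H)$ and using the soluble subgroup $\langle H,b\rangle$ of order exceeding $8$, which is the one-line repair your argument needs.
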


\begin{proof} Assume $|\Sol_G(x)|=8$. Since $G$ is not soluble, $R(G) \neq G$. Moreover, if $R(G) \neq 1$ then $\Sol_{G/R(G)}(xR(G))= \Sol_G(x)/R(G)$ has cardinality $2$ or $4$, which is impossible by Theorem~B and \cite[Theorem~1.2]{ALMM}. Hence we can assume $R(G)=1$. As $o(x)$ divides $|\Sol_G(x)|$, we can consider a Sylow $2$-subgroup of $G$, say $P$, containing $x$. Since $G$ is insoluble, it follows that $G$ is not $2$-nilpotent. Thus, applying \cite[10.1.9]{Robinson}, we can assume that $P$ is not cyclic and $4 \leq |P| \leq 8$.

By Frobenius' normal $p$-complement theorem  (see, for instance, \cite[Theorem~7.4.5]{GOR}), there exist a $2$-subgroup $H$ of $G$ and an element $b \in N_G(H) \setminus C_G(H)$ of odd order. Then $|H| \geq 4$, otherwise $N_G(H)=C_G(H)$.

As $H \leq P^g \leq \Sol_G(x)^g=\Sol_G(x^g)$ for some $g\in G$, without loss of generality we can assume that $H \leq P$.
If $H=P$, then $\langle H,b \rangle \subseteq \Sol_G(x)$, which is impossible since $|\langle H,b \rangle| >8 $. Then $|H|=4$ and $|P|=8$, which implies $\Sol_G(x)=P$. 
 Then $\Sol_G(x)$ has nilpotency class at most $2$, so $G$ is nilpotent of class at most $2$ by Theorem~A, which is a contradiction. This concludes the proof.
\end{proof}

\begin{lem}\label{lem:six}
Let $G$ be a finite group with trivial soluble radical and let $x$ be a self-centralizing element of $G$ of order $3$. Then $|\Sol_G(x)| \in \{24,78\}$.
\end{lem}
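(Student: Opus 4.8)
The plan is to identify $G$ up to isomorphism and then to compute $|\Sol_G(x)|$ directly in the two surviving cases. First I would record the basic consequences of the hypotheses. Since $x \neq 1$ and a soluble group has nontrivial soluble radical, the assumption $R(G)=1$ forces $G$ to be insoluble. Next I claim $\langle x \rangle$ is a full Sylow $3$-subgroup: if $Q$ is a Sylow $3$-subgroup containing $x$, then $C_Q(x) \leq C_G(x) = \langle x \rangle$, while $1 \neq Z(Q) \leq C_Q(x)$; as $Q$ is a $3$-group and $\langle x \rangle \cong C_3$ is simple this gives $Z(Q)=\langle x \rangle$, hence $x \in Z(Q)$ and $Q=\langle x \rangle$. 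Thus $|G|_3 = 3$, and since $N_G(\langle x \rangle)/C_G(\langle x \rangle)$ embeds in $\mathrm{Aut}(C_3)\cong C_2$ we also get $|N_G(\langle x \rangle)| \in \{3,6\}$.

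The heart of the argument, and the main obstacle, is pinning down $G$. I would invoke the classification of finite groups possessing a self-centralizing subgroup of order $3$ (Feit--Thompson); combined with $R(G)=1$, which eliminates the $3$-nilpotent, $3$-closed, and all soluble-quotient alternatives, this leaves $G$ insoluble almost simple. The supporting reductions are: $\mathrm{Soc}(G)=S$ is a single nonabelian simple factor (two or more components, or a nontrivial permutation of components by $x$, would make $C_{\mathrm{Soc}(G)}(x)$ far larger than $3$), and $3 \mid |S|$ with $|S|_3 = 3$ (if $3 \nmid |S|$ then $C_S(x) \leq \langle x \rangle \cap S = 1$, so $x$ is a fixed-point-free automorphism of order $3$ of $S$, forcing $S$ nilpotent by Thompson, a contradiction). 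Among simple groups with $|S|_3=3$ carrying a self-centralizing $3$-element one is left with $S \cong \A(5) \cong \PSL(2,5)$ or $S \cong \PSL(2,7)$, and the centralizer condition then rules out the proper almost simple extensions $\mathrm{Sym}(5)$ and $\PGL(2,7)$ (in each a $3$-element has centralizer of order $6$). Hence $G \cong \A(5)$ or $G \cong \PSL(2,7)$. It is worth noting that the nilpotent--maximal--subgroup machinery (Theorems~\ref{thm:rose}, \ref{thm:bau}, \ref{thm:gw}) does \emph{not} apply here, since e.g.\ $\A(5)$ has no nilpotent maximal subgroup; the self-centralizing hypothesis is exactly what replaces it.

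For the count I would use that in both groups every maximal subgroup is soluble, hence every proper subgroup is soluble; therefore $y \in \Sol_G(x)$ if and only if $\langle x,y \rangle \neq G$, so $\Sol_G(x)$ is the union of the maximal subgroups containing $\langle x \rangle$, and it only remains to enumerate these and apply inclusion--exclusion. For $G \cong \A(5)$ the maximal subgroups containing a given $C_3$ are the two point-stabilizers $\A(4)$ (the two fixed points of the $3$-cycle) and $N_G(\langle x \rangle) \cong \mathrm{Sym}(3)$; all pairwise and triple intersections reduce to $\langle x \rangle$, giving $|\Sol_G(x)| = 12+12+6-3-3-3+3 = 24$. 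For $G \cong \PSL(2,7) \cong \PSL(3,2)$ a $3$-element fixes exactly one point and one line of the Fano plane, so $\langle x \rangle$ lies in exactly one point-stabilizer $\mathrm{Sym}(4)$, one line-stabilizer $\mathrm{Sym}(4)$, and (by a short Sylow count) two Frobenius subgroups $7{:}3$ of order $21$; the two copies of $\mathrm{Sym}(4)$ meet in the stabilizer of a non-incident point--line pair, which is $\cong \mathrm{Sym}(3)$ of order $6$, while all remaining intersections reduce to $\langle x \rangle$, whence $|\Sol_G(x)| = (21+21-3)+(24+24-6)-3 = 39+42-3 = 78$.

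The two delicate points are thus (i) the classification input identifying $G$, where the self-centralizing hypothesis together with $R(G)=1$ is precisely what collapses the list to two groups, and (ii) computing the intersections correctly, most importantly the fact that in $\PSL(2,7)$ the two maximal $\mathrm{Sym}(4)$'s through $\langle x \rangle$ meet in a $\mathrm{Sym}(3)$ rather than only in $\langle x \rangle$; this is what yields $78$ in place of a larger value. Everything past the identification step is routine bookkeeping in two explicit groups.
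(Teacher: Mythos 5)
Your proposal is correct and follows essentially the same route as the paper: both hinge on the Feit--Thompson classification of finite groups with a self-centralizing subgroup of order $3$, which together with $R(G)=1$ forces $G \cong \A(5)$ or $G \cong \PSL(2,7)$, and then compute $|\Sol_G(x)|$ in these two groups. The only difference is one of detail: the paper simply asserts the values $24$ and $78$ for explicit representatives, whereas you derive them by noting that all proper subgroups of these groups are soluble, so $\Sol_G(x)$ is the union of the maximal subgroups containing $x$, and your inclusion--exclusion counts (including the key intersection $\mathrm{Sym}(3)$ of the two $\mathrm{Sym}(4)$'s through $\langle x\rangle$ in $\PSL(2,7)$) are accurate.
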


\begin{proof}
Since $R(G)=1$, by the main theorem of \cite{FT} we deduce that $G$ is isomorphic either to the alternating group $\A(5)$, or to the projective special linear group $\PSL(2,7)$. Since all elements of order $3$ are conjugate in the above groups, it is sufficient to observe that in the former case $|\Sol_{G}(x)|=24$ for $x=(1,2,3)$, while in the latter $|\Sol_G(x)|=78$ for $$
x=\begin{pmatrix}
0 & 1 & 0\\
1 & 1 & 0\\
0 & 0 & 1
\end{pmatrix}.
$$
\end{proof}

\begin{pro}\label{pro:six}
Let $G$ be an insoluble group. Then $|\Sol_G(x)| \neq 6$ for every element $x \in G$.
\end{pro}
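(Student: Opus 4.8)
The plan is to argue by contradiction, assuming $|\Sol_G(x)|=6$ for some element $x$ in an insoluble group $G$, and to exploit the arithmetic and structural tools already assembled. First I would reduce to the case of trivial soluble radical: since $6=2\cdot 3$, if $R(G)\neq 1$ then by \Cref{quotient} the quotient $\Sol_{G/R(G)}(xR(G))=\Sol_G(x)/R(G)$ would have cardinality $2$ or $3$, which is impossible because the cardinality of a solubilizer in an insoluble group cannot be prime (the cited result from \cite{ALMM}). Hence I may assume $R(G)=1$.

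Next I would pin down the possible order of $x$. By part $(c)$ of \Cref{lem:property}, $o(x)$ divides $|\Sol_G(x)|=6$, so $o(x)\in\{1,2,3,6\}$. The value $o(x)=1$ is excluded since then $x=1\in R(G)=1$ would force $\Sol_G(x)=G$, making $G$ soluble. I would then try to rule out the small possibilities using \Cref{lem:centralizer}, which says $|C_G(x)|$ divides $|\Sol_G(x)|=6$, together with $\langle x\rangle\subseteq C_G(x)$. The most promising route is to show that $x$ must be forced into being a self-centralizing element of order $3$, so that \Cref{lem:six} applies and delivers $|\Sol_G(x)|\in\{24,78\}$, contradicting $|\Sol_G(x)|=6$.

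Concretely, I would examine each surviving order. If $o(x)=3$, then $\langle x\rangle\leq C_G(x)$ with $|C_G(x)|$ dividing $6$, so $|C_G(x)|\in\{3,6\}$; the case $|C_G(x)|=6$ would mean $\Sol_G(x)=C_G(x)$ is an abelian (hence soluble) subgroup not properly contained in any larger soluble subgroup by part $(b)$ of \Cref{lem:property}, which via \Cref{cor:nilp} would give $G=\Sol_G(x)$ soluble, a contradiction; thus $C_G(x)=\langle x\rangle$ is self-centralizing of order $3$, and \Cref{lem:six} finishes this case. The orders $o(x)=2$ and $o(x)=6$ must be eliminated by a similar interplay between $C_G(x)$ and the structure of soluble subgroups containing $x$; for instance, when $o(x)=6$ the element $x^2$ has order $3$ and one can try to compare $\Sol_G(x)$ with the solubilizer of this power, while $o(x)=2$ would need an argument showing an involution cannot have a solubilizer of size exactly $6$ in an $R(G)=1$ group.

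The main obstacle I anticipate is the case $o(x)=2$: here the centralizer condition $|C_G(x)|\mid 6$ is weaker, and an involution need not be self-centralizing, so \Cref{lem:six} does not directly apply. I would expect to handle this by combining the classification-flavoured input (the Feit–Thompson type reduction underlying \Cref{lem:six}, or a direct appeal to the fact that $\Sol_G(x)$ being a subgroup would already force solubility via \Cref{cor:nilp}) with a careful counting of the soluble subgroups containing $x$. A clean way to close the argument would be to show that in all surviving cases $\Sol_G(x)$ is in fact a subgroup — whence \Cref{cor:nilp} or \Cref{thmA} applies — or else that $x$ is self-centralizing of order $3$, reducing everything to \Cref{lem:six}.
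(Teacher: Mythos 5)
Your reduction to $R(G)=1$ and your treatment of $o(x)=3$ are essentially correct, and the order-$3$ case follows the same route as the paper: force $C_G(x)=\langle x\rangle$ and invoke the Feit--Thompson classification packaged in Lemma~\ref{lem:six}. (Your use of Lemma~\ref{lem:centralizer} together with Corollary~\ref{cor:nilp} to exclude $|C_G(x)|=6$ is a legitimate substitute for the paper's appeal to Lemma~\ref{lem:bob}; also, your quotient argument silently omits the possibility $|R(G)|=6$, but that case is trivial, since then $R(G)=\Sol_G(x)\ni x$ forces $\Sol_G(x)=G$ and $|G|=6$.) The case $o(x)=6$, which you only gesture at via ``the solubilizer of $x^2$'', is in fact easy to close with tools you already list: $\langle x\rangle\subseteq\Sol_G(x)$ and both sets have $6$ elements, so $\Sol_G(x)=\langle x\rangle$ is a cyclic subgroup not properly contained in any soluble subgroup, and Corollary~\ref{cor:nilp} (or the pairwise-commuting theorem \cite[Theorem~1.2]{ALMM}, which is what the paper uses) gives $G=\langle x\rangle$, a contradiction.

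The genuine gap is the case $o(x)=2$, which you yourself flag as ``the main obstacle'' without supplying an argument; it is the heart of the proposition, and neither of your fallback plans has any purchase there: nothing forces $\Sol_G(x)$ to be a subgroup when $x$ is an involution, and $x$ is certainly not a self-centralizing element of order $3$, so Lemma~\ref{lem:six} is unavailable. The paper's resolution rests on an elementary but specific observation you are missing: any two involutions generate a dihedral group, which is soluble, so \emph{every} involution of $G$ lies in $\Sol_G(x)$. Concretely: a Sylow $2$-subgroup $P$ containing $x$ is soluble, hence $P\subseteq\Sol_G(x)$ and $|P|\in\{2,4\}$; $P$ cannot be cyclic, since then $G$ would be $2$-nilpotent (\cite[10.1.9]{Robinson}) and its odd-order normal $2$-complement would be soluble by Feit--Thompson, making $G$ soluble; so $P$ is elementary abelian of order $4$. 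Then every Sylow $2$-subgroup of $G$ consists of the identity and involutions, hence is contained in $\Sol_G(x)$. Since $G$ is insoluble, the number $n_2$ of Sylow $2$-subgroups is greater than $1$, and it is odd, so $n_2\geq 3$; three distinct Klein four-groups pairwise intersect in at most two elements, so their union has more than $6$ elements, contradicting $|\Sol_G(x)|=6$. Without this dihedral trick (or some equally concrete substitute), your proposal does not prove the proposition.
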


\begin{proof}
Assume that $|\Sol_G(x)|=6$. Then $o(x)$ divides $6$. By \cite[Theorem~1.2]{ALMM}, we can assume $o(x) \neq 6$. If $o(x)=3$, Lemma~\ref{lem:bob} implies that $\Sol_G(x)=N_G(x)$. In particular $C_G(x)=\langle x \rangle$ and $x$ is a self-centralizing element of $G$. Moreover, the soluble radical $R(G)$ of $G$ is trivial, otherwise $\Sol_{G/R(G)}(xR(G))$ would have prime order. Then, by Lemma~\ref{lem:six} we can conclude that $|\Sol_G(x)| \neq 6$, which is a contradiction.

Now assume $o(x)=2$. Let $P$ be a Sylow $2$-subgroup of $G$ containing $x$. Since $G$ is not $2$-nilpotent and $|\Sol_G(x)|=6$, $P$ is elementary abelian of order $4$. Now, for every $y \in G$ such that $o(y)=2$, the group $\langle x,y \rangle$ is a dihedral group and thus $y\in\Sol_G(x)$. Therefore every Sylow $2$-subgroup of $G$ is contained in $\Sol_G(x)$. Let $n_2$ be the number of Sylow $2$-subgroups of $G$. Of course $n_2 >1$ because $G$ is not soluble. Hence $n_2 \geq 3$, and $\Sol_G(x)$ contains more than $6$ elements, our final contradiction.
\end{proof}

As a consequence of Theorem~B, Propositions~\ref{pro:eight} and \ref{pro:six} we have the following.

\begin{cor}
Let $G$ be an insoluble group. Then $|\Sol_G(x)| \geq 10$ for all elements $x \in G$.
\end{cor}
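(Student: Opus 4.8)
The plan is to prove the statement by elimination. Since $|\Sol_G(x)|$ is a positive integer, it suffices to rule out each of the values $1,2,\ldots,9$; the bound $|\Sol_G(x)|\geq 10$ then follows immediately. The whole argument will be a short case analysis assembling the arithmetic restrictions already established, rather than any new construction.

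First I would dispose of the value $1$. Both $1$ and $x$ lie in $\Sol_G(x)$, so if $x\neq 1$ then $|\Sol_G(x)|\geq 2$; and if $x=1$ then $x\in R(G)$, which forces $\Sol_G(x)=G$, whose order is at least $60$ because $G$ is insoluble. Hence $|\Sol_G(x)|=1$ cannot occur. Next I would invoke the known obstructions in turn. The result of \cite{ALMM} stating that $|\Sol_G(x)|$ is never a prime excludes the values $2,3,5,7$. Theorem~B rules out $4=2^2$ and $9=3^2$. Finally, \cref{pro:six} and \cref{pro:eight} eliminate $6$ and $8$, respectively. The four primes $\{2,3,5,7\}$, the two prime squares $\{4,9\}$, the two remaining composites $\{6,8\}$, and the value $1$ together exhaust every integer below $10$, so we conclude $|\Sol_G(x)|\geq 10$.

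I do not expect a genuine obstacle here: all of the difficulty has been front-loaded into Theorem~B and into \cref{pro:six,pro:eight}, which handle precisely the non-prime composite cases that the prime and prime-power restrictions leave open. The only point that requires minor care is bookkeeping, namely verifying that the excluded values really cover the full range $1$ through $9$, which the classification just given confirms.
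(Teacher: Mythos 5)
Your proof is correct and follows essentially the same route as the paper, which states this corollary as an immediate consequence of Theorem~B (excluding $4$ and $9$), Proposition~\ref{pro:eight} (excluding $8$), Proposition~\ref{pro:six} (excluding $6$), and the result of \cite{ALMM} excluding primes. Your explicit handling of the value $1$ and the enumeration of the remaining cases is just the bookkeeping the paper leaves implicit.
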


We point out that if $G=\A (5)$ is the alternating group any element $x \in G$ of order $5$  has $\Sol_G(x)=N_G(x)$ with $|N_G(x)|=10$. Therefore it is possible to have $|\Sol_G(x)|=pq$ with $p>q$ primes.
However there are some restrictions, as showed in the following.

\begin{pro}
Let $G$ be an insoluble group, and let $x$ be any element of $G$ such that $|\Sol_G(x)|=3p$ where $p$ is a prime. Then $o(x)=p$, $p \equiv 1 \pmod 3$ and $\Sol_G(x)=N_G(x)$.
\end{pro}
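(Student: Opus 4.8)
The plan is to analyze the possible values of $o(x)$ using the divisibility constraints established in the preliminary results, and then rule out the unwanted cases by invoking the small-cardinality results already proved. By part $(c)$ of Lemma~\ref{lem:property}, $o(x)$ divides $|\Sol_G(x)|=3p$, so $o(x) \in \{1,3,p,3p\}$. The case $o(x)=1$ is immediate since $x=1$ forces $\Sol_G(x)=G$, contradicting insolubility. The case $o(x)=3p$ would mean $\langle x \rangle$ has order $3p$, hence $\langle x \rangle = \Sol_G(x)$ is a subgroup; then by part $(b)$ of Lemma~\ref{lem:property} $\Sol_G(x)$ is not properly contained in any soluble subgroup, and since a cyclic group of order $3p$ is nilpotent with trivial (hence class at most $2$) Sylow $2$-subgroup, Corollary~\ref{cor:nilp} would force $G=\Sol_G(x)$ to be soluble, a contradiction. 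So the real work lies in distinguishing the cases $o(x)=3$ and $o(x)=p$.

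First I would dispose of the case $o(x)=3$, which requires handling the subcases $p=2$, $p=3$, and $p\geq 5$ separately. If $p=2$ then $|\Sol_G(x)|=6$, excluded by Proposition~\ref{pro:six}; if $p=3$ then $|\Sol_G(x)|=9=3^2$, excluded by Theorem~B. For $p\geq 5$ with $o(x)=3$, I would argue as in the proof of Proposition~\ref{pro:six}: set $P=\langle x\rangle$, a subgroup of order $3$, and apply Lemma~\ref{lem:bob} (whose hypothesis $|\Sol_G(x)|\leq p^2$ is satisfied here as $3p \leq p^2$ for $p\geq 3$) to conclude $\Sol_G(x)=N_G(P)$, so that $C_G(x)=\langle x\rangle$ and $x$ is self-centralizing of order $3$. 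The soluble radical must be trivial, since otherwise $\Sol_{G/R(G)}(xR(G))$ would have prime order by Lemma~\ref{quotient}, contradicting the first displayed result in \cite{ALMM} that solubilizers in insoluble groups cannot have prime cardinality. Then Lemma~\ref{lem:six} pins down $|\Sol_G(x)| \in \{24,78\}$, and neither equals $3p$ for any prime $p$ with $o(x)=3$, giving the contradiction that eliminates $o(x)=3$.

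Having excluded every possibility except $o(x)=p$, I would then establish the two remaining claims. Once $o(x)=p$, the subgroup $P=\langle x\rangle$ has order $p$, and since $3p \leq p^2$ for $p\geq 3$ (the case $p=2$ giving $|\Sol_G(x)|=6$ is again killed by Proposition~\ref{pro:six}, and $p=3$ gives $o(x)=3$, already treated), Lemma~\ref{lem:bob} applies to yield $\Sol_G(x)=N_G(P)=N_G(\langle x\rangle)$, which is the desired normalizer identification. Finally, to obtain $p\equiv 1\pmod 3$, I would count inside $N_G(P)$: since $C_G(x)\leq N_G(P)=\Sol_G(x)$ has order $3p$ and $C_G(x)\supseteq\langle x\rangle$, while the factor $N_G(P)/C_G(P)$ embeds into $\mathrm{Aut}(P)\cong\mathbb{Z}/(p-1)$, the order $3$ appearing in $|N_G(P)|=3p$ must come from an element acting nontrivially on $P$ (otherwise $N_G(P)=C_G(P)$ would be abelian of order $3p$, making $\langle x\rangle$ contained in a larger soluble subgroup and forcing $G$ soluble via Corollary~\ref{cor:nilp} as in the $o(x)=3p$ case). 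Hence $3$ divides $|\mathrm{Aut}(P)|=p-1$, giving $p\equiv 1\pmod 3$.

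The main obstacle I anticipate is the final divisibility argument $p\equiv 1\pmod 3$: it demands a careful bookkeeping of the action of $N_G(P)$ on $P$ and a clean argument that the Sylow $3$-subgroup of $N_G(P)$ cannot centralize $x$, since if it did one would need to independently rule out $N_G(P)$ being cyclic or nilpotent of order $3p$ via the nilpotent-maximal-subgroup machinery. The structure of $N_G(P)$ as a Frobenius-type group of order $3p$ is what forces the nontrivial automorphism, and getting this step tight without circularity is the delicate part; everything else reduces to invoking the already-established exclusions for cardinalities $6$, $9$, $24$, and $78$.
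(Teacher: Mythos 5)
Your overall architecture (case analysis on $o(x)$, the exclusions for cardinalities $6$ and $9$, Lemma~\ref{lem:bob} for the case $o(x)=p$, and the automorphism count giving $p\equiv 1\pmod 3$) matches the paper's proof, and your handling of $o(x)\in\{1,3p\}$ and of the final congruence is sound. However, there is a genuine error in your elimination of the case $o(x)=3$, $p\geq 5$: you invoke Lemma~\ref{lem:bob} claiming its hypothesis ``$|\Sol_G(x)|\leq p^2$'' holds because $3p\leq p^2$, but you have conflated the proposition's prime $p$ with the prime in the lemma, which is the \emph{order of the element}. For $x$ of order $3$ the lemma requires $|\Sol_G(x)|\leq 3^2=9$, whereas here $|\Sol_G(x)|=3p\geq 15$, so Lemma~\ref{lem:bob} does not apply at all. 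Worse, the conclusion you extract from it, $C_G(x)=\langle x\rangle$, would not follow even if it did: if $\Sol_G(x)=N_G(P)$ with $P=\langle x\rangle$ of order $3$ and $|N_G(P)|=3p$ odd, then $N_G(P)/C_G(P)$ embeds in $\mathrm{Aut}(P)\cong\mathbb{Z}/2\mathbb{Z}$, which forces $N_G(P)=C_G(P)$, i.e.\ $|C_G(x)|=3p$ --- the opposite of what you assert.

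The correct route to self-centralization (and the one the paper takes) avoids Lemma~\ref{lem:bob} entirely: by Lemma~\ref{lem:centralizer}, $|C_G(x)|$ divides $|\Sol_G(x)|=3p$, and it is divisible by $o(x)=3$, so $|C_G(x)|\in\{3,3p\}$. If $|C_G(x)|=3p$, then $C_G(x)=\Sol_G(x)$ is a group of order $3p$ whose center contains $x$, hence the center has index dividing $p$, so $\Sol_G(x)$ is abelian; then \cite[Theorem~1.2]{ALMM} (or Corollary~\ref{cor:nilp}, since $3p$ is odd) forces $G$ soluble, a contradiction. Hence $|C_G(x)|=3$, so $x$ is self-centralizing of order $3$, and with $R(G)=1$ Lemma~\ref{lem:six} yields $|\Sol_G(x)|\in\{24,78\}$, neither of which is $3$ times a prime --- the contradiction you wanted. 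With this substitution your proof goes through; the rest of your argument, including the use of Corollary~\ref{cor:nilp} where the paper cites \cite[Theorem~1.2]{ALMM} in the congruence step, is fine.
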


\begin{proof}
First of all, we can assume $R(G)=1$, otherwise $\Sol_{G/R(G)}(xR(G))$ is a prime against \cite[Corollary 3.2 $(b)$]{ALMM}. By Proposition~\ref{pro:six} and Theorem~\ref{thm:quadrato} we can assume $p \geq 5$. Assume $o(x)=3$. Since $|C_G(x)|$ divides $3p$, we have $|C_G(x)|=3$ and we get a contradiction by Lemma~\ref{lem:six}. Therefore $o(x)=p$ and from Lemma~\ref{lem:bob} it follows that $\Sol_G(x)=N_G(x)$. Finally, by \cite[Theorem~1.2]{ALMM} we obtain $p \equiv 1 \pmod 3$, and we are done.  
\end{proof}

\section*{Acknowledgements}
\noindent The first author was partially supported by a grant from the Niels Hendrik Abel Board, and she would like to thank the International Mathematical Union. The second and the third authors are members of the National Group for Algebraic and Geometric Structures, and their Applications  (GNSAGA -- INdAM). 
This work was carried out during the first author's visit to the University of Salerno. She wishes to thank the Department of Mathematics for the excellent hospitality.
Finally, the authors are very grateful to the referee for the insightful comments, valuable for the improvement of this work.

\section*{Data Availability Statement}
\noindent This manuscript has no associate data.

\end{document}